\theoremstyle{plain}
\newcommand{\C}{\mathbb{C}}
\newcommand{\F}{\mathcal{F}}
\newtheorem{theorem}{Theorem}[section]
\newtheorem{maintheorem}{Theorem}
\newtheorem{secondtheorem}{Theorem}[section]
\newtheorem{lemma}[theorem]{Lemma}
\newtheorem{proposition}[theorem]{Proposition}
\newtheorem{corollary}[theorem]{Corollary}
\newtheorem{maincorollary}{Corollary}
\theoremstyle{definition}
\newtheorem{definition}{Definition}[section]
\newtheorem{example}{Example}[section]
\newtheorem{remark}{Remark}[section]
\DeclareSymbolFont{matha}{OML}{txmi}{m}{it}
\DeclareMathSymbol{\varv}{\mathord}{matha}{118}
\begin{document}

\title[The Bruce-Roberts Tjurina number of 1-forms along complex varieties]{The Bruce-Roberts Tjurina number of holomorphic 1-forms along complex analytic varieties}

\author{Pedro Barbosa}
\address[Pedro Barbosa]{Departamento de Matem\'atica - ICEX, Universidade Federal de Minas Gerais, UFMG}
\curraddr{Av. Pres. Ant\^onio Carlos 6627, 31270-901, Belo Horizonte-MG, Brasil.}
\email{pedrocbj@ufmg.br}
\author{Arturo Fern\'andez-P\'erez}
\address[A. Fern\'andez-P\'erez]{Departamento de Matem\'atica - ICEX, Universidade Federal de Minas Gerais, UFMG}
\curraddr{Av. Pres. Ant\^onio Carlos 6627, 31270-901, Belo Horizonte-MG, Brasil.}
\email{fernandez@ufmg.br}

\author{V\'ictor Le\'on}
\address[V. Le\'on]{ILACVN - CICN, Universidade Federal da Integra\c c\~ao Latino-Americana, UNILA}
\curraddr{Parque tecnol\'ogico de Itaipu, Foz do Igua\c cu-PR, 85867-970 - Brasil}
\email{victor.leon@unila.edu.br}

\subjclass[2010]{Primary 32S65, 32V40}
\keywords{Bruce-Roberts number, Bruce-Roberts Tjurina number, Milnor number, Tjurina number, holomorphic foliations}
\thanks{The first author is supported by CAPES-Brazil. The second author acknowledges support from CNPq Projeto Universal 408687/2023-1 ``Geometria das Equa\c{c}\~oes Diferenciais Alg\'ebricas" and CNPq-Brazil PQ-306011/2023-9.}

\begin{abstract}
We introduce the notion of the \textit{Bruce-Roberts Tjurina number} for holomorphic 1-forms relative to a pair $(X,V)$ of complex analytic subvarieties. When the pair $(X,V)$ consists of isolated complex analytic hypersurfaces, we prove that the Bruce-Roberts Tjurina number is related to the Bruce-Roberts number, the Tjurina number of the 1-form with respect to $V$ and the Tjurina number of $X$, among other invariants. As an application, we present a quasihomogeneity result for germs of holomorphic foliations in complex dimension two. 
\end{abstract}
\dedicatory{In memory of Celso dos Santos Viana}
\maketitle
\section{Introduction and statement of the results}
Let $(X, 0)$ denote the germ of a complex analytic variety at $(\mathbb{C}^n, 0)$, and let $\omega$ be the germ of a holomorphic 1-form with an isolated singularity at $(\mathbb{C}^n, 0)$. Let $V$ be a germ of a complex analytic hypersurface with an isolated singularity at $0\in\C^n$ \textit{invariant} by $\omega$. 
In this paper, we introduce the \textit{Bruce-Roberts Tjurina number} of $\omega$ relative to the pair $(X,V)$ as 
\[\tau_{BR}(\omega,X,V):=\dim_{\C}\frac{\mathcal{O}_n}{\omega(\Theta_X)+I_V},\]
where $\mathcal{O}_n$ stands for the local ring of holomorphic functions from $(\mathbb{C}^n, 0)$ to $(\mathbb{C}, 0)$, $I_V$ is ideal of germs of holomorphic functions vanishing on $(V, 0)$, and $\Theta_X$ is the module of the so-called \textit{logaritmic vector fields} -- as defined by K. Saito \cite{Saito1980}:
\[
\Theta_X = \{\xi \in \Theta_n : \xi(I_X)\subseteq I_X\}.
\]
Here $I_X$ is the ideal of germs of holomorphic functions vanishing on $(X, 0)$ and $\Theta_n$ is the module of holomorphic vector fields on $(\mathbb{C}^n, 0)$. Geometrically, $\Theta_X$ represents the $\mathcal{O}_n$-submodule of $\Theta_n$ that are tangent to $(X, 0)$ along the regular points of $X$. 
We recall that $V$ is said to be \textit{invariant} by $\omega$ if $T_p V \subset \operatorname{Ker}(\omega_p)$ for every regular point $p$ on $V$.
\par Note that if $\omega=df$ is an exact holomorphic 1-form with an isolated singularity at $0\in\C^n$, then $V=\{f=0\}$ is a complex analytic hypersurface invariant by $\omega$, and consequently $\tau_{BR}(df,X,V)$ coincides with the \textit{Bruce-Roberts Tjurina number} $\tau_{BR}(f,X)$ of $f$ along $X$, as defined by Ahmed-Ruas \cite{Ahmed2011} and Bivi\`a-Ausina--Ruas \cite{Bivia2020}. Observe also that $\tau_{BR}(df,\C^n,V)=\dim_\C\dfrac{\mathcal{O}_n}{\left\langle\frac{\partial f}{\partial{x}_1},\ldots,\frac{\partial f}{\partial{x}_n},f\right\rangle}=\tau_0(V)$ is the classical \textit{Tjurina number} of $V$ defined by G.N. Tjurina \cite{Tjurina}. The Bruce-Roberts Tjurina number of holomorphic functions has been studied in \cite{Ahmed2011}, \cite{Bivia2020}, \cite{Bivia2024}, and \cite{Ruas2024}. 
\par On the other hand, the \textit{Bruce-Roberts number of $\omega$ relative to $(X, 0)$} was defined in \cite{Barbosa2024} by the following expression:
\[\mu_{BR}(\omega, X) := \dim_{\mathbb{C}} \frac{\mathcal{O}_n}{\omega(\Theta_X)}.\]
Analogously, $\mu_{BR}(\omega,X)$ generalizes \textit{Bruce-Roberts number} $\mu_{BR}(f,X)$ of a holomorphic function $f$ along $X$ defined by J.W. Bruce and R.M. Roberts \cite{BR}, and the classical \textit{Milnor number} of $f$ defined in \cite{Milnor}. It follows directly from the definitions that 
\[\tau_{BR}(\omega,X,V)\leq\mu_{BR}(\omega,X).\] Therefore, if $\mu_{BR}(\omega,X)<\infty$ then $\tau_{BR}(\omega,X,V)<\infty$. 
\par The aim of this paper is to establish connections between the Bruce-Roberts Tjurina number and other indices of 1-forms such as the Bruce-Roberts number of 1-forms itself, the \textit{Tjurina number of a 1-form} with respect to an invariant isolated complex analytic hypersurface  and the \textit{Tjurina number} of $X$, (see for instance, Theorem \ref{2mainresult} and Proposition \ref{prop1}). As application, we obtain some results for holomorphic foliations in complex dimension two. Indeed, by combining a Mattei's theorem \cite[Th\'eor\`em A]{quasiMattei} with a corollary of Bivi\`a-Ausina--Kourliouros--Ruas \cite[Corollary 3.4]{Bivia2024}, we prove a quasihomogeneity result for germs of non-dicritical curve generalized holomorphic foliations with respect to a non-invariant complex analytic curve $X$, (see Theorem \ref{propfol}). We remark that curve generalized foliations were introduced by Camacho-Lins Neto-Sad in \cite{CLS}.
\par Before we state our results, we define the 
\textit{Tjurina number} of $\displaystyle\omega=\sum_{j=1}^{n}A_j(x)dx_j$ with respect to a germ $V=\{f=0\}$ of complex analytic hypersurface with an isolated singularity at $0\in\C^n$ invariant by $\omega$ as follows
\begin{align}\label{tjurina}
    \tau_0(\omega,V):=\dim_{\C} \frac{\mathcal{O}_n}{\langle A_1, \ldots, A_n,f\rangle}.
\end{align}
\par For a holomorphic vector field $v$ with an isolated singular on a complex analytic hypersurface $V$, the Tjurina number of $v$ with respect to $V$ first appears in the work of G\'omez-Mont \cite{Xavier1998} though he does not use this terminology. To our knowledge, the first authors to explicitly use the term ``Tjurina name'' in the context of foliations were Cano-Corral-Mol in \cite{Cano}. This number also appears in the study of Baum-Bott residues of foliations in \cite{Licanic2004}. More recently, \cite{FPGBSM} exhibits a relationship between the Milnor and Tjurina number of germs of foliations on $(\C^2,0)$. The above-mentioned definition can be viewed  as a natural extension of the Tjurina number for 1-forms on $(\C^2,0)$ to higher dimensions. 
\par In \cite{Ebeling2001}, \cite{Ebeling}, Ebeling--Gusein-Zade introduced the notion of the $GSV$-index for a 1-form $\omega$ with respect to $X$. This index is denoted by $\operatorname{Ind}_{\operatorname{GSV}}(\omega;X,0)$. According to \cite{Ebeling2001} or \cite[Theorem 5.3.35]{Ebeling2023}, when $X=\phi^{-1}(0)$ is an isolated complete intersection singularity with $\phi=(\phi_1,\ldots,\phi_k)$ (an ICIS, for short), this index can also be defined by
\begin{align*}
\operatorname{Ind}_{\operatorname{GSV} }(\omega;X,0):=\dim_{\C} \frac{\mathcal{O}_{\C^n,0}}{I_X+I_{k+1}\begin{pmatrix}
\omega \\ d\phi\end{pmatrix}},
\end{align*}
where $I_X$ denotes the ideal of germs of holomorphic functions vanishing on $(X, 0)$ and  $I_{k+1}\begin{pmatrix}
\omega \\ d\phi\end{pmatrix}$ represents the ideal generated by the $(k+1) \times (k+1)$-minors of the matrix
\begin{align}\label{eq_1}
\begin{pmatrix}
\omega \\ d\phi\end{pmatrix}=\left( \begin{array}{ccc}
A_1 & \cdots & A_{n}\\
\displaystyle\frac{\partial \phi_1}{\partial x_1} & \cdots & \displaystyle\frac{\partial \phi_1}{\partial x_{n}} \\
\vdots & \cdots & \vdots \\
\displaystyle\frac{\partial \phi_k}{\partial x_1} & \cdots & \displaystyle\frac{\partial \phi_k}{\partial x_{n}} 
\end{array} \right).
\end{align}
\par Now, let $X=\phi^{-1}(0)$ be an ICIS at $0\in\C^n$, defined by $\phi=(\phi_1,\ldots,\phi_k)$, and let $V=\{f=0\}$ be 
 an isolated complex hypersurface invariant by $\omega$.  Assuming that $X$ is not invariant by $\omega$, we introduce the \textit{$GSV$-index of $\omega$ with respect to the pair $(X,V)$} as follows:
\begin{equation}\label{gsv}
    \operatorname{Ind}_{\operatorname{GSV} }(\omega;X,V,0):=\dim_{\C} \frac{\mathcal{O}_{n}}{I_X+I_{k+1}\begin{pmatrix}
\omega \\ d\phi\end{pmatrix}+\langle f \rangle}.
\end{equation}

With this terminology, we can establish our first result:
\begin{maintheorem}\label{2mainresult} 
Let $\omega$ be a germ of a holomorphic 1-form with isolated singularity at $(\mathbb{C}^n, 0)$. Suppose that the pair $(X,V)$ are isolated complex analytic hypersurfaces at $0\in\C^n$, $V$ invariant by $\omega$ and $X$ is not invariant by $\omega$. Then
\[\tau_{BR}(\omega,X,V)=\operatorname{Ind}_{\operatorname{GSV}}(\omega;X,V,0)+ \tau_0(\omega,V)-\tau_0(X)+\dim_{\mathbb{C}}\dfrac{\omega(\Theta_X)\cap I_V}{\omega(\Theta_X^T)\cap I_V},\]
where $\Theta_X^{T}$ is the submodule of $\Theta_X$ of trivial vector fields.
\end{maintheorem}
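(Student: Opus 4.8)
The plan is to work directly in the hypersurface case and then run a chain of dimension counts over $\mathcal{O}_n$. Since $X$ is a hypersurface, write $X=\{h=0\}$ with $h$ reduced (so $I_X=\langle h\rangle$, $d\phi=dh$ and $k=1$), put $I_V=\langle f\rangle$ and $\omega=\sum_i A_i\,dx_i$, and abbreviate $M=\omega(\Theta_X)$, $M^T=\omega(\Theta_X^{T})$. Two structural facts about the trivial module, which in the hypersurface case is $\Theta_X^{T}=\langle h\rangle\,\Theta_n+\Theta_X^{0}$ (with $\Theta_X^{0}$ generated by the Hamiltonian fields $\kappa_{ij}=\frac{\partial h}{\partial x_j}\partial_{x_i}-\frac{\partial h}{\partial x_i}\partial_{x_j}$), will drive the argument. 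First, since $\omega(\kappa_{ij})$ is exactly the $(i,j)$-minor of $\begin{pmatrix}\omega\\ dh\end{pmatrix}$ and $\omega(h\,\partial_{x_i})=hA_i$, one gets $M^T=I_2\begin{pmatrix}\omega\\ dh\end{pmatrix}+h\langle A_1,\dots,A_n\rangle$. Second, the map $\xi\mapsto\xi(h)/h$ identifies $\Theta_X/\Theta_X^{T}$ with $(J_h:h)/J_h$, where $J_h=\langle\partial h/\partial x_1,\dots,\partial h/\partial x_n\rangle$; because multiplication by $h$ is an endomorphism of the finite-dimensional Milnor algebra $\mathcal{O}_n/J_h$, its kernel and cokernel have equal dimension, so $\dim_{\C}\Theta_X/\Theta_X^{T}=\dim_{\C}\mathcal{O}_n/(J_h+\langle h\rangle)=\tau_0(X)$. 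This is the source of the $-\tau_0(X)$ term.

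Next I would set up the additivity. Introduce $N=\{\xi\in\Theta_X:\omega(\xi)\in I_V\}$ and $K=\{\xi\in\Theta_X:\omega(\xi)=0\}$. Applying $\omega$ and using the modular law gives an isomorphism $\dfrac{M+I_V}{M^T+I_V}\cong\dfrac{\Theta_X}{\Theta_X^{T}+N}$, whence, writing $a:=\dim_{\C}\mathcal{O}_n/(M^T+I_V)$,
\[
\tau_{BR}(\omega,X,V)=a-\tau_0(X)+\dim_{\C}\frac{N}{N\cap\Theta_X^{T}}.
\]
The snake lemma applied to the surjections $\omega\colon N\to M\cap I_V$ and $\omega\colon N\cap\Theta_X^{T}\to M^T\cap I_V$ (with kernels $K$ and $K\cap\Theta_X^{T}$) yields
\[
0\longrightarrow\frac{K}{K\cap\Theta_X^{T}}\longrightarrow\frac{N}{N\cap\Theta_X^{T}}\longrightarrow\frac{\omega(\Theta_X)\cap I_V}{\omega(\Theta_X^{T})\cap I_V}\longrightarrow0,
\]
so that $\dim_{\C}\frac{N}{N\cap\Theta_X^{T}}=\dim_{\C}\frac{K}{K\cap\Theta_X^{T}}+\dim_{\C}\frac{\omega(\Theta_X)\cap I_V}{\omega(\Theta_X^{T})\cap I_V}$. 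This already produces the last term of the statement, and reduces the whole theorem to proving $a+\dim_{\C}\frac{K}{K\cap\Theta_X^{T}}=\operatorname{Ind}_{\operatorname{GSV}}(\omega;X,V,0)+\tau_0(\omega,V)$.

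For the left-hand side I would compare $M^T+I_V=I_2\begin{pmatrix}\omega\\ dh\end{pmatrix}+h\langle A_1,\dots,A_n\rangle+\langle f\rangle$ with the GSV ideal $\langle h\rangle+I_2\begin{pmatrix}\omega\\ dh\end{pmatrix}+\langle f\rangle$: the multiplication-by-$h$ map shows their quotient is $\mathcal{O}_n/(M^T+I_V:h)$, giving $a=\operatorname{Ind}_{\operatorname{GSV}}(\omega;X,V,0)+\dim_{\C}\mathcal{O}_n/(M^T+I_V:h)$. Since $\langle A_1,\dots,A_n\rangle+\langle f\rangle\subseteq(M^T+I_V:h)$, everything collapses to the single ideal-theoretic identity
\[
\dim_{\C}\frac{K}{K\cap\Theta_X^{T}}=\dim_{\C}\frac{(M^T+I_V:h)}{\langle A_1,\dots,A_n\rangle+\langle f\rangle}.
\]
I would attack this through the $\mathcal{O}_n$-linear map $K\to(M^T+I_V:h)/(\langle A_1,\dots,A_n\rangle+\langle f\rangle)$ sending $\xi$ to $\xi(h)/h$; it is well defined because for $\xi\in K$ the isolated singularity of $\omega$ forces $\xi$ to be a Koszul syzygy of $A_1,\dots,A_n$, so $\xi(h)$ automatically lands in the minors ideal.

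The hard part is showing that this map is an isomorphism, i.e. that its kernel is exactly $K\cap\Theta_X^{T}$ and that it is surjective. This is precisely where the invariance of $V$ by $\omega$ must enter, through the relations $A_i\,\frac{\partial f}{\partial x_j}-A_j\,\frac{\partial f}{\partial x_i}\in\langle f\rangle$: these are what allow one to absorb the $\langle f\rangle$-contributions when lifting a class of $(M^T+I_V:h)$ back to a field in $K$, and conversely constrain which fields of $K$ become trivial. Controlling this interaction between the two isolated-singularity structures — that of $\omega$, producing the Koszul syzygies, and that of $h$, producing $(J_h:h)/J_h$ — modulo $\langle f\rangle$ is the main obstacle. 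Once the isomorphism is established, combining the three displayed identities yields the stated formula.
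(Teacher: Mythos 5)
Your chain of reductions is correct as far as it goes, and each step checks out: the isomorphism $\dfrac{\omega(\Theta_X)+I_V}{\omega(\Theta_X^T)+I_V}\cong\dfrac{\Theta_X}{\Theta_X^T+N}$, the identity $\dim_{\C}\Theta_X/\Theta_X^T=\tau_0(X)$ via $(J_h:h)/J_h$ and the multiplication-by-$h$ endomorphism of the Milnor algebra, the snake-lemma sequence relating $N$, $K$ and $\omega(\Theta_X)\cap I_V$, the colon-ideal computation $a=\operatorname{Ind}_{\operatorname{GSV}}(\omega;X,V,0)+\dim_{\C}\mathcal{O}_n/\bigl(\omega(\Theta_X^T)+I_V:h\bigr)$, and the well-definedness of $\xi\mapsto\xi(h)/h$ on $K$ (via Koszul syzygies of the regular sequence $A_1,\ldots,A_n$). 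But the argument then stops at exactly the point you yourself flag as ``the hard part'': the identity
\[
\dim_{\C}\frac{K}{K\cap\Theta_X^{T}}\;=\;\dim_{\C}\frac{\bigl(\omega(\Theta_X^T)+I_V:h\bigr)}{\langle A_1,\ldots,A_n\rangle+\langle f\rangle}
\]
is asserted, not proven. Since your own reductions show this identity is \emph{equivalent} to the theorem, what you have produced is a correct reformulation of the statement, not a proof of it; this is a genuine gap, not a detail.

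It is worth seeing how the paper closes precisely this gap, because it shows the isomorphism strategy is not the natural one: under the hypotheses \emph{both sides of your identity are zero}, and they vanish for two independent reasons. The right-hand side vanishes (equivalently, $\dim_{\C}\mathcal{O}_n/(\omega(\Theta_X^T)+I_V)=\operatorname{Ind}_{\operatorname{GSV}}(\omega;X,V,0)+\tau_0(\omega,V)$) by the splitting Lemma \ref{newlema61} from \cite{Barbosa2024}, applied to the generators $hA_1,\ldots,hA_n$, the $2\times 2$ minors and $f$; this is where the non-invariance of $X$ and the invariance of $V$ enter, through the relative primality of $h$ with the minors and with $f$ — not through the relations $A_i\frac{\partial f}{\partial x_j}-A_j\frac{\partial f}{\partial x_i}\in\langle f\rangle$ you propose to use. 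The left-hand side vanishes, i.e.\ $K\subseteq\Theta_X^T$, which is exactly the content of the second equality in (\ref{eq_tu}), namely $\dim_{\C}\Theta_X/\Theta_X^T=\dim_{\C}\omega(\Theta_X)/\omega(\Theta_X^T)$, proved in \cite{Lima2024a} (with \cite{Lima2024}); note that your Milnor-algebra argument only establishes the first equality $\dim_{\C}\Theta_X/\Theta_X^T=\tau_0(X)$, and the second one is not formal — it says precisely that every syzygy of $(A_1,\ldots,A_n)$ that is tangent to $X$ is a trivial logarithmic field. If you import these two facts, both sides of your unproven identity are zero and your argument becomes a complete (mildly rearranged) version of the paper's proof; without them, the theorem has not been proved, and trying to build a direct isomorphism between two spaces that are separately zero for unrelated reasons is unlikely to succeed.
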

\par We emphasize that $\Theta_X^T$ is defined in \cite[Section 2]{Lima2024}, and if $\omega$ is a holomorphic 1-form at $0\in\C^n$, and $(X,0)$ is an ICIS such that $\mu_{BR}(\omega,X)<\infty$, then 
\begin{equation}\label{eq_tu}
    \tau_0(X)=\dim_\C\frac{\Theta_X}{\Theta_X^{T}}
=\dim_\C\frac{\omega(\Theta_X)}{\omega(\Theta_X^{T})}\end{equation}
by \cite[Proposition 4.6]{Lima2024} and \cite[Corollary 3.7]{Lima2024a}. 
\par As a consequence, we obtain the following Corollary:
\begin{maincorollary}\label{corollary1}
   Let $X\subset(\C^n,0)$ be an isolated complex analytic hypersurface and $f\in\mathcal{O}_n$ a function germ such that $\mu_{BR}(f,X)<\infty$. Then
\[\tau_{BR}(f,X)=\operatorname{Ind}_{\operatorname{GSV}}(df;X,V,0)+ \tau_0(V)-\tau_0(X)+\dim_{\mathbb{C}}\dfrac{df(\Theta_X)\cap \langle f\rangle}{df(\Theta_X^T)\cap \langle f\rangle},\]
where $V=\{f=0\}\subset(\C^n,0)$.
\end{maincorollary}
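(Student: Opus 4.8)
The plan is to deduce the statement directly from Theorem \ref{2mainresult} by specializing to the exact $1$-form $\omega=df$ together with the hypersurface $V=\{f=0\}$. The only genuine work is to verify that the three standing hypotheses of Theorem \ref{2mainresult} --- that $df$ has an isolated singularity, that $V$ is invariant by $df$, and that $X$ is \emph{not} invariant by $df$ --- are automatically guaranteed by the assumption $\mu_{BR}(f,X)<\infty$; once this is done, each term of Theorem \ref{2mainresult} translates into the corresponding term of the Corollary.

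First I would record the inclusion that controls finiteness. Since $\Theta_X\subseteq\Theta_n$ and $df(\Theta_n)=\langle\partial f/\partial x_1,\dots,\partial f/\partial x_n\rangle$ is the Jacobian ideal, we have $df(\Theta_X)\subseteq df(\Theta_n)$, so the quotient $\mathcal{O}_n/df(\Theta_X)$ surjects onto $\mathcal{O}_n/df(\Theta_n)$, whence $\mu_{BR}(f,X)\geq\mu(f)$. Therefore $\mu_{BR}(f,X)<\infty$ forces $\mu(f)<\infty$, i.e. $df$ has an isolated singularity at $0$ and $V=\{f=0\}$ has an isolated singularity; in particular $f$ is reduced, so $I_V=\langle f\rangle$. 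That $V$ is invariant by $df$ is immediate, since at a regular point $p\in V$ one has $T_pV=\operatorname{Ker}((df)_p)$.

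Next I would rule out invariance of $X$. If $X$ were invariant by $df$, then for every $\xi\in\Theta_X$ the function $df(\xi)=\xi(f)$ would vanish on the regular part of $X$, hence on all of $X$ by reducedness, giving $df(\Theta_X)\subseteq I_X$. Then $\mathcal{O}_n/df(\Theta_X)$ would surject onto $\mathcal{O}_n/I_X$, which is infinite-dimensional because $X$ is a variety of positive dimension; this contradicts $\mu_{BR}(f,X)<\infty$. Hence $X$ is not invariant by $df$, and all hypotheses of Theorem \ref{2mainresult} are met.

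Finally I would substitute $\omega=df$ and $I_V=\langle f\rangle$ into the conclusion of Theorem \ref{2mainresult} and identify the terms: by the discussion in the Introduction $\tau_{BR}(df,X,V)=\tau_{BR}(f,X)$; writing $df=\sum_j(\partial f/\partial x_j)\,dx_j$ gives $\tau_0(df,V)=\dim_{\mathbb{C}}\mathcal{O}_n/\langle\partial f/\partial x_1,\dots,\partial f/\partial x_n,f\rangle=\tau_0(V)$; the GSV term and $\tau_0(X)$ are literally unchanged; and the last term becomes $\dim_{\mathbb{C}}\bigl(df(\Theta_X)\cap\langle f\rangle\bigr)/\bigl(df(\Theta_X^T)\cap\langle f\rangle\bigr)$. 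This yields exactly the asserted identity. The step most deserving of care is the finiteness bookkeeping above: everything hinges on extracting the isolated-singularity and non-invariance conditions from the single hypothesis $\mu_{BR}(f,X)<\infty$, after which the Corollary is a pure specialization of Theorem \ref{2mainresult}.
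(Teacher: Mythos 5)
Your proposal is correct and takes essentially the same route as the paper: Corollary 1 is obtained by specializing Theorem 1 to $\omega=df$ and $V=\{f=0\}$, identifying $\tau_{BR}(df,X,V)=\tau_{BR}(f,X)$ and $\tau_0(df,V)=\tau_0(V)$. The paper treats the corollary as an immediate consequence and leaves the hypothesis-checking implicit, whereas you explicitly extract the isolated singularity of $df$, the reducedness of $f$ (so $I_V=\langle f\rangle$), and the non-invariance of $X$ from the single assumption $\mu_{BR}(f,X)<\infty$; this added bookkeeping is sound and only makes the specialization airtight.
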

As an application, motivated by \cite[Theorem 4.1]{Bivia2024}, we present the following theorem. 
\begin{secondtheorem}\label{propfol}
Let $\F$ be a germ of a non-dicritical curve generalized foliation at $(\C^2,0)$, and let $X$ be a germ of a reduce curve at $(\C^2,0)$ not invariant by $\F$. 
Let $V=\{f=0\}$ be the reduced equation of the total set of separatrices of $\F$. If $\mu_{BR}(\F,X)=\tau_{BR}(\F,X,V)$, then
there exists coordinates $(u,v)\in \C^2$, $g,h\in\mathcal{O}_2$, with $u(0)=0$, $v(0)=0$, $g(0)\neq 0$ and integers $\alpha,\beta$, $d\in\mathbb{N}$ such that 
\[f(u,v)=\sum_{\alpha i+\beta j=d}P_{i,j}u^{i}v^{j},\,\,\,\,P_{i,j}\in\C,\]
\[g\omega=df+h(\beta v du-\alpha udv),\]
and the pair $(f,X)$ is relatively quasihomogeneous in these coordinates. 
\end{secondtheorem}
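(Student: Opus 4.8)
The plan is to convert the numerical hypothesis into an ideal-membership statement, feed the resulting \emph{non-relative} equality into Mattei's theorem to obtain the quasihomogeneous normal form of $\F$, and then upgrade this to the relative setting via the Bivi\`a-Ausina--Kourliouros--Ruas criterion. Throughout I write $\omega=A_1\,du+A_2\,dv$ and, since $f$ is reduced, $I_V=\langle f\rangle$. First I would unwind the two invariants: from the exact sequence
\[0\to\frac{\omega(\Theta_X)+\langle f\rangle}{\omega(\Theta_X)}\to\frac{\mathcal{O}_2}{\omega(\Theta_X)}\to\frac{\mathcal{O}_2}{\omega(\Theta_X)+\langle f\rangle}\to 0\]
one reads off $\mu_{BR}(\F,X)-\tau_{BR}(\F,X,V)=\dim_\C\frac{\omega(\Theta_X)+\langle f\rangle}{\omega(\Theta_X)}$, which vanishes precisely when $f\in\omega(\Theta_X)$. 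Hence the hypothesis is equivalent to the existence of $\xi\in\Theta_X$ with $\omega(\xi)=f$.

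Next, since $\Theta_X\subseteq\Theta_2$ we have $\omega(\Theta_X)\subseteq\omega(\Theta_2)=\langle A_1,A_2\rangle$, so $f\in\langle A_1,A_2\rangle$. Applying the same exact-sequence computation with $X=\C^2$ gives $\mu(\F)=\tau_0(\omega,V)$, i.e. the Milnor and Tjurina numbers of the foliation coincide. Because $\F$ is non-dicritical and curve generalized with reduced total separatrix $V=\{f=0\}$, Mattei's theorem \cite[Th\'eor\`eme A]{quasiMattei}, which characterizes quasihomogeneous foliations by the equality of their Milnor and Tjurina numbers, now applies and yields coordinates $(u,v)$, a unit $g$, a germ $h$, and weights $(\alpha,\beta)$ with $f$ quasihomogeneous of weighted degree $d$ such that
\[g\omega=df+h(\beta v\,du-\alpha u\,dv).\]
This already produces the first two displayed equations of the statement and fixes the weights $(\alpha,\beta)$ and the degree $d$ once and for all.

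It remains to prove that the pair $(f,X)$ is relatively quasihomogeneous in these coordinates. By the criterion of Bivi\`a-Ausina--Kourliouros--Ruas \cite[Corollary 3.4]{Bivia2024} (the relative analogue of Saito's theorem behind \cite[Theorem 4.1]{Bivia2024}), this is equivalent to $f\in df(\Theta_X)$, and hence to the weighted Euler field $R=\alpha u\,\partial_u+\beta v\,\partial_v$ being logarithmic along $X$. The strategy is to transport the membership $f\in\omega(\Theta_X)$ through the normal form. Contracting with $R$ and using Euler's identity $\alpha u f_u+\beta v f_v=d\,f$ gives $df(R)=d\,f$ and $(\beta v\,du-\alpha u\,dv)(R)=0$, whence $\omega(R)=\tfrac{d}{g}f$. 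Therefore any $\xi\in\Theta_X$ with $\omega(\xi)=f$ satisfies $\omega(\xi-\tfrac{g}{d}R)=0$, so $\xi=\tfrac{g}{d}R+c\,w$ for some $c\in\mathcal{O}_2$ and some $w$ generating $\ker\omega$. Substituting into $g\,\omega(\xi)=df(\xi)+h(\beta v\,du-\alpha u\,dv)(\xi)$ and using $df(w)=-h(\beta v\,du-\alpha u\,dv)(w)$ collapses everything to
\[g\,f=df(\xi)+c\,h\,(\beta v\,du-\alpha u\,dv)(w),\]
with $df(\xi)\in df(\Theta_X)$; so $f\in df(\Theta_X)$ would follow once the single obstruction term $c\,h\,(\beta v\,du-\alpha u\,dv)(w)$ is shown to lie in $df(\Theta_X)$, equivalently once $R\in\Theta_X$.

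The hard part is precisely this last membership, $R\in\Theta_X$: it says that $X$ is quasihomogeneous for the \emph{same} weights $(\alpha,\beta)$ dictated by the separatrix of $\F$. Writing $I_X=\langle p\rangle$, the tangency condition $\xi(p)\in\langle p\rangle$ together with $\xi=\tfrac{g}{d}R+c\,w$ reduces the problem to showing $c\in I_X$, i.e. that the $\F$-directional component of $\xi$ vanishes on $X$. This is where the non-dicritical and curve generalized hypotheses enter, together with the fact that $X$ is \emph{not} invariant by $\F$: non-invariance means $w\notin\Theta_X$, so $w(p)\notin\langle p\rangle$, and the curve generalized structure controls the total separatrix $V$ finely enough to force the compensation $c\in I_X$. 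Once $R\in\Theta_X$ is established, $f=\tfrac{1}{d}df(R)\in df(\Theta_X)$, and \cite[Corollary 3.4]{Bivia2024} delivers the relative quasihomogeneity of $(f,X)$ in the coordinates $(u,v)$, completing the proof. I expect this transfer of $R$ into $\Theta_X$ — rather than the normal-form step — to be the genuine obstacle, since it is exactly the point at which the geometry of the non-invariant curve $X$ must be synchronized with the intrinsic weights of $\F$.
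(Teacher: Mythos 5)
Your opening reduction and the normal-form step are correct and agree with the paper: from the exact sequence you wrote, $\mu_{BR}(\F,X)-\tau_{BR}(\F,X,V)=\dim_\C\bigl(\omega(\Theta_X)+\langle f\rangle\bigr)/\omega(\Theta_X)$, so the hypothesis is exactly $f\in\omega(\Theta_X)$; in particular $f\in\langle A_1,A_2\rangle$, i.e.\ $\mu_0(\omega)=\tau_0(\omega,V)$, and Mattei's theorem \cite[Th\'eor\`eme A]{quasiMattei} produces the coordinates, the unit $g$, the germ $h$ and the normal form. The genuine gap is in your last step, and it is a wrong turn rather than a missing lemma. The criterion of \cite[Corollary 3.4]{Bivia2024} asks for \emph{some} $\delta\in\Theta_X$ having $f$ as an eigenfunction; it is not equivalent to the specific Euler field $R=\alpha u\,\partial_u+\beta v\,\partial_v$ being logarithmic for $X$. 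Requiring $R\in\Theta_X$ amounts to asking that $X$ itself be quasihomogeneous for the very weights and coordinates dictated by $\F$, which is not a consequence of the hypotheses --- the paper's own proof explicitly proceeds under the assumption $X_{\alpha,\beta}\notin\Theta_X$ --- and you give no argument for it: the claim that non-dicriticality and the generalized-curve condition ``force $c\in I_X$'' is asserted, not proved, and you flag it yourself as the unresolved obstacle. As written, the proof is incomplete exactly where the relative statement must be established.

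The fix is one line away from your own computation; what you failed to use is the invariance of $V$. Since $w$ generates $H_\omega$ and $V=\{f=0\}$ is $\F$-invariant, $df(w)\in\langle f\rangle$, say $df(w)=\ell f$; equivalently $h(\beta v\,du-\alpha u\,dv)(w)=-\ell f$, because $\omega(w)=0$. Substituting this into your identity $gf=df(\xi)+c\,h(\beta v\,du-\alpha u\,dv)(w)$ gives
\[(g+c\ell)\,f=df(\xi),\qquad \xi\in\Theta_X,\]
so $f$ is an eigenfunction of the logarithmic vector field $\xi$, which is what \cite[Corollary 3.4]{Bivia2024} requires; the field $R$ never needs to lie in $\Theta_X$. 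This is precisely the paper's argument in different packaging: there, Corollary \ref{coro4} (via Proposition \ref{prop1}) converts the hypothesis into the module identity $\Theta_V^{\omega}=H_\omega+\Theta_X\cap\Theta_V^{\omega}$; since $X_{\alpha,\beta}\in\Theta_V^{\omega}$ by the normal form, one decomposes $X_{\alpha,\beta}=\zeta+\delta$ with $\zeta\in H_\omega$ and $\delta\in\Theta_X\cap\Theta_V^{\omega}$, and $f$ is an eigenfunction of $\delta$ because it is one of $X_{\alpha,\beta}$ (Euler relation) and of $\zeta$ (tangency to $\F$ plus invariance of $V$). Your decomposition $\xi=\tfrac{g}{d}R+cw$ is that same decomposition read backwards, so your approach is salvageable --- but as submitted it stalls on a membership ($R\in\Theta_X$) that is both unproven and unnecessary.
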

\par The paper is organized as follows: Section \ref{basic} is devoted to establishing known results of commutative algebra that are necessary for the proof of Theorem \ref{2mainresult}. In Section \ref{prova}, we prove Theorem \ref{2mainresult}, and in Section \ref{bruce} we present a formula that relates the Bruce-Roberts Tjurina number and the Bruce-Roberts number, following \cite{Bivia2024}. Finally, in Section \ref{fol}, by combining a Mattei's theorem \cite[Th\'eor\`em A]{quasiMattei} with the corollary of Bivi\`a-Ausina--Kourliouros--Ruas \cite[Corollary 3.4]{Bivia2024}, we prove Theorem \ref{propfol} that can be viewed as a quasihomogeneity result for germs of non-dicritical curve generalized holomorphic foliations with respect to a non-invariant complex curve.

\section{Basic tools}\label{basic}
This section is devoted to establishing some known results in commutative algebra and properties of $\Theta_X$ and $\Theta_X^T$ that we will use in the proof of our main result. 
First, we state a technical lemma from \cite[Lemma 4.1]{Barbosa2024}:

\begin{lemma} \label{newlema61}
Let $f_1, \ldots, f_m,g,p_1,\ldots, p_n \in \mathcal{O}_n$, where $f_i$ and $g$ are relatively prime, for any $i \in \{ 1,\ldots,m \}$. Then
\begin{eqnarray*}
\dim_{\C} \frac{\mathcal{O}_n}{\langle f_1,\ldots,f_m,gp_1,\ldots,gp_n\rangle}&=&
\dim_{\C} \frac{\mathcal{O}_n}{\langle f_1,\ldots,f_m,p_1,\ldots,p_n\rangle}\\
& & +\dim_{\C} \frac{\mathcal{O}_n}{\langle f_1,\ldots,f_m,g\rangle}.
\end{eqnarray*}
\end{lemma}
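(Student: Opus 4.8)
The plan is to realize the three colengths as the terms of a single short exact sequence of $\C$-vector spaces and then invoke additivity of dimension. Write $A=\langle f_1,\ldots,f_m\rangle$ and $P=\langle p_1,\ldots,p_n\rangle$, and set
$$I=A+gP=\langle f_1,\ldots,f_m,gp_1,\ldots,gp_n\rangle,\qquad J=A+P,\qquad K=A+\langle g\rangle .$$
Since $gP\subseteq\langle g\rangle$ we have $I\subseteq K$, so the identity map of $\mathcal{O}_n$ descends to a surjection $\pi\colon\mathcal{O}_n/I\twoheadrightarrow\mathcal{O}_n/K$ whose kernel is $K/I$. The target and this kernel will account for the two summands on the right-hand side.

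First I would check that multiplication by $g$ identifies $\mathcal{O}_n/J$ with $\ker\pi=K/I$. Define $\mu_g\colon\mathcal{O}_n/J\to\mathcal{O}_n/I$ by $\mu_g(\overline h)=\overline{gh}$. It is well defined because $gJ=gA+gP\subseteq A+gP=I$, and its image equals $(g\mathcal{O}_n+I)/I=(g\mathcal{O}_n+A)/I=K/I$, precisely $\ker\pi$. Thus
$$0\longrightarrow\mathcal{O}_n/J\stackrel{\mu_g}{\longrightarrow}\mathcal{O}_n/I\stackrel{\pi}{\longrightarrow}\mathcal{O}_n/K\longrightarrow 0$$
is exact except possibly at the left-hand end, and additivity of $\dim_\C$ along it is exactly the asserted equality. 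Everything therefore reduces to the injectivity of $\mu_g$.

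Proving that $\mu_g$ is injective is the heart of the argument and the only place the coprimality hypothesis enters; I expect this to be the main obstacle. Unwinding the definition, $\mu_g(\overline h)=0$ means $gh\in I=A+gP$, say $gh=\sum_i a_if_i+g\sum_j c_jp_j$; putting $w=h-\sum_j c_jp_j$ this becomes $gw\in A$, and injectivity is equivalent to the implication $gw\in A\Rightarrow w\in A$, i.e. to the statement that $g$ is a nonzerodivisor on $\mathcal{O}_n/A$, or $A:\langle g\rangle=A$. I would deduce this from the fact that $\mathcal{O}_n$ is a UFD together with the coprimality of $g$ with each $f_i$: the point to establish is that $g$ belongs to no associated prime of $A$, so that it cannot annihilate a nonzero class modulo $A$. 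Once $A:\langle g\rangle=A$ is secured we obtain $w\in A$, hence $h=w+\sum_j c_jp_j\in A+P=J$ and $\overline h=0$; the short exact sequence is then genuinely exact and the dimension count closes the proof.
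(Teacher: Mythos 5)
Your reduction is the right one, and up to the last step it follows the standard route (note that the paper itself does not prove this lemma: it is imported from \cite[Lemma 4.1]{Barbosa2024}, so the comparison here is with the expected exact-sequence argument, which is exactly your diagram). The surjection $\pi$, the well-definedness of $\mu_g$, and the identification of its image with $K/I=\ker\pi$ are all correct, and you correctly isolate the crux: everything follows once $g$ is a nonzerodivisor on $\mathcal{O}_n/A$, i.e.\ $A:\langle g\rangle=A$. (Strictly speaking this condition is sufficient for injectivity rather than equivalent to it --- injectivity of $\mu_g$ only forces $A:\langle g\rangle\subseteq A+P$ --- but you use the implication in the direction you need.) The genuine gap is the final claim that coprimality of $g$ with each generator $f_i$ forces $g$ to avoid every associated prime of $A$. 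In a UFD this reasoning only controls the associated primes of height one: such a prime is principal, say $\langle q\rangle\supseteq A$, whence $q\mid f_i$ for all $i$ and coprimality gives $q\nmid g$. It says nothing about associated primes of height $\geq 2$ (embedded primes, or the maximal ideal when $A$ is $\mathfrak{m}$-primary). Concretely, in $\mathcal{O}_2$ take $f_1=x^2$, $f_2=y^2$, $g=x+y$: then $g$ is coprime to each $f_i$, yet
\[
g\cdot(x^2-xy+y^2)=x^3+y^3=x\cdot x^2+y\cdot y^2\in A,
\qquad x^2-xy+y^2\notin A,
\]
so $g$ is a zerodivisor modulo $A$ (the unique associated prime of $A$ is $\langle x,y\rangle$, which contains $g$).

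Moreover, this gap cannot be closed by a cleverer argument, because the statement as printed (with only the pairwise coprimality hypothesis) is false. Continue the same example with $p_1=x^2-xy+y^2$ and $p_2=x^3$: then $gp_1=x^3+y^3$ and $gp_2=x^4+x^3y$ both lie in $\langle x^2,y^2\rangle$, so the left-hand side equals $\dim_\C\mathcal{O}_2/\langle x^2,y^2\rangle=4$, while $\langle f_1,f_2,p_1,p_2\rangle=\langle x^2,xy,y^2\rangle$ and $\langle f_1,f_2,g\rangle=\langle x^2,y^2,x+y\rangle$ give $3+2=5$ on the right. The correct hypothesis is precisely the condition you reduced the problem to: $g$ must be a nonzerodivisor on $\mathcal{O}_n/\langle f_1,\ldots,f_m\rangle$, equivalently $g$ lies in no associated prime of that ideal; under that assumption your short exact sequence is exact and your proof is complete. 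Pairwise coprimality does imply this when $m=1$ (all associated primes of a nonzero principal ideal in a UFD are principal), which is presumably where the intuition comes from, but it fails for $m\geq 2$. So either the lemma's hypothesis must be strengthened in this way, or the nonzerodivisor condition must be verified directly in each application where the lemma is invoked.
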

 The following lemma immediately follows from \cite[Proposition 2.1 (ii)]{aatiyah}. 
\begin{lemma}\label{re1}
If $M_1,M_2$ and $M_3$ are submodules of $M$, with $M_3\subset M_2$, then 
\[\dfrac{M_1+M_2}{M_1+M_3}\cong \dfrac{M_2}{M_3+(M_1\cap M_2)}.\]
\end{lemma}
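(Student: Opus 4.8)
The plan is to reduce the claimed isomorphism to the second isomorphism theorem for modules -- which is precisely \cite[Proposition 2.1(ii)]{aatiyah} -- together with Dedekind's modular law, the latter being the only place where the hypothesis $M_3\subset M_2$ is genuinely used.

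First I would rewrite the numerator of the left-hand quotient. Since $M_3\subset M_2$, one has $M_1+M_3\subseteq M_1+M_2$ and moreover
\[
M_2+(M_1+M_3)=M_1+M_2+M_3=M_1+M_2,
\]
so that $M_1+M_2=M_2+(M_1+M_3)$. This presents $M_1+M_2$ as the sum of the two submodules $M_2$ and $M_1+M_3$, which is exactly the shape needed to apply the second isomorphism theorem.

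Next I would apply \cite[Proposition 2.1(ii)]{aatiyah} to the submodules $M_2$ and $M_1+M_3$ of $M$, obtaining
\[
\frac{M_1+M_2}{M_1+M_3}=\frac{M_2+(M_1+M_3)}{M_1+M_3}\cong\frac{M_2}{M_2\cap(M_1+M_3)}.
\]
It then remains only to identify the denominator. Here I invoke the modular law: because $M_3\subseteq M_2$, we have
\[
M_2\cap(M_1+M_3)=(M_1\cap M_2)+M_3.
\]
Substituting this identity into the previous isomorphism yields the desired conclusion.

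The computation is short, and I do not expect a serious obstacle; the only subtlety worth flagging is that the simplification of $M_2\cap(M_1+M_3)$ is not a triviality of set-theoretic distributivity but rests on the modular law, and that this is precisely where the assumption $M_3\subset M_2$ enters. If one prefers to avoid citing the modular law by name, one can instead check the two inclusions $(M_1\cap M_2)+M_3\subseteq M_2\cap(M_1+M_3)$ and its reverse directly, both of which are immediate once $M_3\subseteq M_2$ is used.
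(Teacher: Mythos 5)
Your proof is correct and follows exactly the route the paper intends: the paper simply asserts that the lemma ``immediately follows'' from \cite[Proposition 2.1(ii)]{aatiyah}, i.e.\ the second isomorphism theorem, which is precisely the tool you apply. You have merely made explicit the two details the paper leaves implicit --- the rewriting $M_1+M_2=M_2+(M_1+M_3)$ and the modular-law identification $M_2\cap(M_1+M_3)=(M_1\cap M_2)+M_3$, both valid since $M_3\subseteq M_2$ --- so there is nothing to correct.
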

We also state the following result from \cite[Proposition 2.1]{Lima2024}:
\begin{proposition}\label{prop12}
    Let $(X,0)$ be the ICIS determined by $\phi=(\phi_1,\ldots,\phi_k):(\C^n,0)\to(\C^k,0)$, then 
\[\Theta_X^T=I_{k+1}\left( \begin{array}{ccc}
\displaystyle\frac{\partial}{\partial{x}_1} & \cdots & \displaystyle\frac{\partial}{\partial{x}_n}\\
\displaystyle\frac{\partial \phi_1}{\partial x_1} & \cdots & \displaystyle\frac{\partial \phi_1}{\partial x_{n}} \\
\vdots & \cdots & \vdots \\
\displaystyle\frac{\partial \phi_k}{\partial x_1} & \cdots & \displaystyle\frac{\partial \phi_k}{\partial x_{n}}\end{array} \right) +\left\langle\phi_i\frac{\partial}{\partial{x}_j},\,\,i=1,\ldots,k,\,\,j=1,\ldots,n\right\rangle,\]
where the first term in the right-hand side is the submodule of $\Theta_X$ generated by the $(k+1)$-
minors of the matrix.
\end{proposition}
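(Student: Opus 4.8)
The plan is to work with the $\mathcal{O}_n$-linear map
\[
\Phi\colon \Theta_n \longrightarrow \mathcal{O}_n^{\,k},\qquad \xi\longmapsto \bigl(\xi(\phi_1),\ldots,\xi(\phi_k)\bigr),
\]
whose matrix in the frame $\partial/\partial x_1,\ldots,\partial/\partial x_n$ is the Jacobian $D\phi$, so that $\Theta_X=\Phi^{-1}(I_X\,\mathcal{O}_n^{\,k})$. Recall from \cite[Section 2]{Lima2024} that the trivial module $\Theta_X^T$ is generated by the two families of logarithmic fields that are tangent to $X$ for the most elementary reasons: those with coefficients in $I_X$, namely $I_X\,\Theta_n=\langle\phi_i\,\partial/\partial x_j\rangle$, and those annihilating every component of $\phi$, namely $\ker\Phi$; thus $\Theta_X^T=\ker\Phi+I_X\,\Theta_n$. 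I would then establish the two inclusions of the asserted equality separately.

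The inclusion $\supseteq$ is routine. The fields $\phi_i\,\partial/\partial x_j$ already belong to $I_X\Theta_n$. For a $(k+1)$-minor $\eta$ of $M$ -- read as a vector field by expanding along its first row $\partial/\partial x_1,\ldots,\partial/\partial x_n$ -- applying $\eta$ to $\phi_l$ reproduces the determinant of $M$ with that first row replaced by the gradient of $\phi_l$; for $1\le l\le k$ this determinant has two equal rows, hence vanishes, so $\eta\in\ker\Phi$. Consequently both generating families lie in $\Theta_X^T$.

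The content of the statement is the reverse inclusion, for which it suffices to prove the syzygy assertion
\[
\ker\Phi\subseteq I_{k+1}(M),
\]
i.e.\ that every relation among the columns of $D\phi$ is a combination of the Cramer-type relations carried by the $(k+1)$-minors of $M$. I would obtain this from the Buchsbaum--Rim complex of $\Phi$: its first differential has image exactly $I_{k+1}(M)$, and exactness at the term $\Theta_n$ -- that is, $\ker\Phi=I_{k+1}(M)$ -- holds as soon as the ideal $I_k(D\phi)$ of maximal minors attains the maximal grade $n-k+1$.

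The main obstacle is to verify this grade condition from the ICIS hypothesis, i.e.\ that the critical set $\Sigma_\phi=V\bigl(I_k(D\phi)\bigr)$ has $\codim\Sigma_\phi=n-k+1$. This is exactly where the isolated-singularity assumption is used: the equality $\Sigma_\phi\cap X=\operatorname{Sing}(X)=\{0\}$ shows that the fiber of $\phi|_{\Sigma_\phi}$ over $0$ is a point, so by upper semicontinuity of fiber dimension the generic fiber of $\phi|_{\Sigma_\phi}$ is $0$-dimensional; since the generic Milnor fiber of an ICIS is smooth, the discriminant $\phi(\Sigma_\phi)$ is a proper subvariety of $(\C^k,0)$. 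Together with the $0$-dimensionality of the generic fiber this forces $\dim\Sigma_\phi\le k-1$, while the determinantal bound gives $\dim\Sigma_\phi\ge k-1$, so $\codim\Sigma_\phi=n-k+1$. With the grade condition secured, the Buchsbaum--Rim acyclicity gives $\ker\Phi=I_{k+1}(M)$, and combining the two inclusions yields the equality $\Theta_X^T=I_{k+1}(M)+\langle\phi_i\,\partial/\partial x_j\rangle$.
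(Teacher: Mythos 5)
The paper itself never proves Proposition~\ref{prop12}: it is imported verbatim from Proposition 2.1 of \cite{Lima2024}, so there is no internal argument to compare against, and your proposal supplies an actual proof where the paper supplies only a citation. On its mathematical merits your argument is sound, and it is the natural one: the inclusion $\supseteq$ follows from the repeated-row expansion of the minors together with $\phi_i\,\partial/\partial x_j\in I_X\Theta_n$; the inclusion $\subseteq$ is correctly reduced to the syzygy statement $\ker\Phi=I_{k+1}(M)$, which is exactly Buchsbaum--Rim acyclicity under the grade hypothesis $\operatorname{grade} I_k(D\phi)\ge n-k+1$; and your extraction of that hypothesis from the ICIS condition (the fibre of $\phi|_{\Sigma_\phi}$ over $0$ is $\{0\}$, hence $\phi|_{\Sigma_\phi}$ is finite, the discriminant is a proper analytic germ, so $\dim\Sigma_\phi\le k-1$) is the standard Looijenga-type argument. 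Two remarks. First, a small one: the lower bound $\dim\Sigma_\phi\ge k-1$ is vacuous when $X$ is smooth at $0$, since then $I_k(D\phi)$ is the unit ideal; but in that case the grade hypothesis holds trivially, so nothing breaks. Second, the only load-bearing caveat: your opening step asserts that $\Theta_X^T=\ker\Phi+I_X\Theta_n$ \emph{is} the definition given in Section 2 of \cite{Lima2024}. The present paper defines $\Theta_X^T$ only by pointing to that source, so your text is a complete proof of the stated proposition only if this identification is faithful to it. It is the natural meaning of ``trivial'' (fields tangent to every fibre of $\phi$, plus fields with coefficients in $I_X$), and for $k=1$ with isolated singularity it recovers the classical module of Hamiltonian fields plus $h\Theta_n$, consistently with every use of $\Theta_X^T$ and of equation \eqref{eq_tu} in this paper; but if \cite{Lima2024} instead takes the displayed generators as the definition, your argument proves the same equality of modules with the logical roles of the two sides exchanged, and should be rephrased accordingly. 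Either way, the substantive content --- that for an ICIS the intrinsic trivial module and the explicitly generated one coincide, the only input from the singularity hypothesis being the maximal codimension of the critical locus --- is established by your argument, which moreover makes visible that the proposition is at bottom a Buchsbaum--Rim statement; the paper's citation buys brevity, your proof buys self-containedness.
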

Consequently, if $\displaystyle\omega=\sum_{j=1}^{n}A_j(x)dx_j$, Proposition \ref{prop12} implies that
\begin{equation}\label{eq_omega}
    \omega(\Theta_X^T)=I_{k+1}\begin{pmatrix}
\omega \\ d\phi\end{pmatrix}+\left\langle\phi_i\cdot A_j,\,\,i=1,\ldots,k,\,\,j=1,\ldots,n\right\rangle
\end{equation}
where $I_{k+1}\begin{pmatrix}
\omega \\ d\phi\end{pmatrix}$ is the ideal generated by the $(k+1)$-minors of the matrix given in (\ref{eq_1}).

\section{Proof of Theorem \ref{2mainresult}}\label{prova}
For the convenience of the reader, we repeat the statement of Theorem \ref{2mainresult}.
\begin{maintheorem} 
Let $\omega$ be a germ of a holomorphic 1-form with isolated singularity at $(\mathbb{C}^n, 0)$. Suppose that the pair $(X,V)$ are isolated complex analytic hypersurfaces at $0\in\C^n$, $V$ invariant by $\omega$ and $X$ is not invariant by $\omega$. Then
\[\tau_{BR}(\omega,X,V)=\operatorname{Ind}_{\operatorname{GSV}}(\omega;X,V,0)+ \tau_0(\omega,V)-\tau_0(X)+\dim_{\mathbb{C}}\dfrac{\omega(\Theta_X)\cap I_V}{\omega(\Theta_X^T)\cap I_V},\]
where $\Theta_X^{T}$ is the submodule of $\Theta_X$ of trivial vector fields.
\end{maintheorem}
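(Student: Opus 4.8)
The plan is to compute $\tau_{BR}(\omega,X,V)=\dim_{\C}\mathcal{O}_n/(\omega(\Theta_X)+I_V)$ by comparing the ideal $\omega(\Theta_X)+I_V$ with the smaller one $\omega(\Theta_X^T)+I_V$ and bookkeeping the difference. Since $X$ is a hypersurface we have $k=1$, so $I_X=\langle\phi\rangle$, $I_V=\langle f\rangle$, and by \eqref{eq_omega} one has $\omega(\Theta_X^T)=I_2\begin{pmatrix}\omega\\ d\phi\end{pmatrix}+\langle\phi A_1,\ldots,\phi A_n\rangle$. Writing $A:=\omega(\Theta_X)$ and $B:=\omega(\Theta_X^T)\subseteq A$, the short exact sequence
\[0\to\frac{A+I_V}{B+I_V}\to\frac{\mathcal{O}_n}{B+I_V}\to\frac{\mathcal{O}_n}{A+I_V}\to 0\]
gives $\tau_{BR}(\omega,X,V)=\dim_{\C}\frac{\mathcal{O}_n}{B+I_V}-\dim_{\C}\frac{A+I_V}{B+I_V}$, so it suffices to evaluate the two terms on the right.

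For the first term I would substitute \eqref{eq_omega} to rewrite $B+I_V=I_2\begin{pmatrix}\omega\\ d\phi\end{pmatrix}+\langle\phi A_1,\ldots,\phi A_n\rangle+\langle f\rangle$ and then apply Lemma \ref{newlema61} with $g=\phi$, $p_j=A_j$, and the remaining generators taken to be the $2$-minors of \eqref{eq_1} together with $f$. This yields
\[\dim_{\C}\frac{\mathcal{O}_n}{B+I_V}=\dim_{\C}\frac{\mathcal{O}_n}{I_2\begin{pmatrix}\omega\\ d\phi\end{pmatrix}+\langle f\rangle+\langle A_1,\ldots,A_n\rangle}+\dim_{\C}\frac{\mathcal{O}_n}{I_2\begin{pmatrix}\omega\\ d\phi\end{pmatrix}+\langle f\rangle+\langle\phi\rangle}.\]
Because every $2$-minor lies in $\langle A_1,\ldots,A_n\rangle$, the first summand collapses to $\dim_{\C}\mathcal{O}_n/\langle A_1,\ldots,A_n,f\rangle=\tau_0(\omega,V)$, while the second summand is precisely $\operatorname{Ind}_{\operatorname{GSV}}(\omega;X,V,0)$ by \eqref{gsv}. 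Hence $\dim_{\C}\mathcal{O}_n/(B+I_V)=\operatorname{Ind}_{\operatorname{GSV}}(\omega;X,V,0)+\tau_0(\omega,V)$.

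For the second term I would apply Lemma \ref{re1} with $M_1=I_V$, $M_2=A$, $M_3=B$ to get $\frac{A+I_V}{B+I_V}\cong\frac{A}{B+(A\cap I_V)}$. Splitting $\dim_{\C}A/B$ along the intermediate submodule $B+(A\cap I_V)$ and using $B\subseteq A$ to identify $(B+(A\cap I_V))/B\cong(A\cap I_V)/(B\cap I_V)$ via the second isomorphism theorem, I obtain
\[\dim_{\C}\frac{A+I_V}{B+I_V}=\dim_{\C}\frac{A}{B}-\dim_{\C}\frac{A\cap I_V}{B\cap I_V}=\tau_0(X)-\dim_{\C}\frac{\omega(\Theta_X)\cap I_V}{\omega(\Theta_X^T)\cap I_V},\]
where the identity $\dim_{\C}A/B=\tau_0(X)$ is exactly \eqref{eq_tu}. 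Substituting the two computations into the exact-sequence formula produces the stated equality.

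The step I expect to be the main obstacle is justifying the hypothesis of Lemma \ref{newlema61}, namely that $\phi$ be a non-zero-divisor modulo the ideal generated by the $2$-minors of \eqref{eq_1} and $f$ (in particular, coprime to those generators). This is exactly where the geometric assumptions must enter: the non-invariance of $X$ prevents $\{\phi=0\}$ from lying in the tangency locus cut out by $I_2\begin{pmatrix}\omega\\ d\phi\end{pmatrix}$, and the fact that $V$ is invariant while $X$ is not, both having isolated singularities, should prevent $X$ and $V$ from sharing a component, giving coprimality of $\phi$ and $f$. I would also need to confirm that \eqref{eq_tu} is available, i.e. that $\mu_{BR}(\omega,X)<\infty$ under the present hypotheses, so that $\dim_{\C}\omega(\Theta_X)/\omega(\Theta_X^T)=\tau_0(X)$ may indeed be invoked via \cite{Lima2024} and \cite{Lima2024a}.
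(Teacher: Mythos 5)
Your proposal is correct and follows essentially the same route as the paper's own proof: the same exact sequence reducing $\tau_{BR}(\omega,X,V)$ to the two terms $\dim_{\C}\mathcal{O}_n/(\omega(\Theta_X^T)+I_V)$ and $\dim_{\C}(\omega(\Theta_X)+I_V)/(\omega(\Theta_X^T)+I_V)$, the same application of Lemma \ref{newlema61} (with $g=\phi$, $p_j=A_j$) and of Lemma \ref{re1} plus the second isomorphism theorem, and the same final appeal to \eqref{eq_tu}. Your closing remark correctly pinpoints where the invariance hypotheses and finiteness of $\mu_{BR}(\omega,X)$ must be invoked, which the paper treats only implicitly.
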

\begin{proof} 
The exactness of the following sequence of $\C$-vector spaces
\[0\longrightarrow\dfrac{\omega(\Theta_X)+I_V}{\omega(\Theta_X^T)+I_V}{\overset {\alpha}{\longrightarrow}}\dfrac{\mathcal{O}_n}{\omega(\Theta_X^T)+I_V}{\overset {\beta}{\longrightarrow}}\dfrac{\mathcal{O}_n}{\omega(\Theta_X)+I_V}\longrightarrow0,\]
implies that 
\begin{equation}\label{eq_prin} 
\tau_{BR}(\omega, X, V) = \dim_\C \frac{\mathcal{O}_n}{\omega(\Theta_X^T) + I_V} - \dim_\C \frac{\omega(\Theta_X) + I_V}{\omega(\Theta_X^T) + I_V}. 
\end{equation} 
To compute $\displaystyle\dim_\C \frac{\omega(\Theta_X) + I_V}{\omega(\Theta_X^T) + I_V}$, we apply Lemma \ref{re1} with $M_1 = I_V$, $M_2 = \omega(\Theta_X)$, and $M_3 = \omega(\Theta_X^T)$, yielding \begin{equation}\label{Eq4} \frac{\omega(\Theta_X) + I_V}{\omega(\Theta_X^T) + I_V} \cong \frac{\omega(\Theta_X)}{\omega(\Theta_X^T) + (\omega(\Theta_X) \cap I_V)}. \end{equation} From the exact sequence \begin{equation}\label{Eq5} 0 \longrightarrow \frac{\omega(\Theta_X^T) + (\omega(\Theta_X) \cap I_V)}{\omega(\Theta_X^T)} \longrightarrow \frac{\omega(\Theta_X)}{\omega(\Theta_X^T)} \longrightarrow \frac{\omega(\Theta_X)}{\omega(\Theta_X^T) + (\omega(\Theta_X) \cap I_V)} \longrightarrow 0, \end{equation} we deduce \begin{equation}\label{Eq6} \dim_\C \frac{\omega(\Theta_X) + I_V}{\omega(\Theta_X^T) + I_V} = \dim_\C \frac{\omega(\Theta_X)}{\omega(\Theta_X^T)} - \dim_\C \frac{\omega(\Theta_X^T) + (\omega(\Theta_X) \cap I_V)}{\omega(\Theta_X^T)}. \end{equation} Now, \begin{equation}\label{Eq_w} \frac{\omega(\Theta_X^T) + (\omega(\Theta_X) \cap I_V)}{\omega(\Theta_X^T)} \cong \frac{\omega(\Theta_X) \cap I_V}{\omega(\Theta_X^T) \cap (\omega(\Theta_X) \cap I_V)} \cong \frac{\omega(\Theta_X) \cap I_V}{\omega(\Theta_X^T) \cap I_V}. \end{equation} Substituting (\ref{Eq_w}) into equation (\ref{Eq6}), we obtain \begin{equation}\label{eq_inter} \dim_\C \frac{\omega(\Theta_X) + I_V}{\omega(\Theta_X^T) + I_V} = \dim_\C \frac{\omega(\Theta_X)}{\omega(\Theta_X^T)} - \dim_\C \frac{\omega(\Theta_X) \cap I_V}{\omega(\Theta_X^T) \cap I_V}. \end{equation}

Next, we compute 
$\displaystyle\dim_\C \frac{\mathcal{O}_n}{\omega(\Theta_X^T) + I_V}$. Suppose $X = \{\phi = 0\}$, $V = \{f = 0\}$, and 
$\omega = \sum_{j=1}^{n} A_j(x) dx_j$. Then, using (\ref{eq_omega}), 
\begin{eqnarray*} 
\dim_\C \frac{\mathcal{O}_n}{\omega(\Theta_X^T) + I_V} &=& \dim_\C \frac{\mathcal{O}_n}{I_2\begin{pmatrix}
\omega \\ d\phi\end{pmatrix} + \langle \phi A_i \rangle_{1 \leq i \leq n} + \langle f \rangle} \\ &=& \dim_\C \frac{\mathcal{O}n}{\langle \phi A_1, \ldots, \phi A_n, \frac{\partial \phi}{\partial x_j} A_k - \frac{\partial \phi}{\partial x_k} A_j, f \rangle_{(j,k) \in \Lambda}}, 
\end{eqnarray*} 
where $\Lambda = \{(j, k): j, k = 1, \ldots, n; j \neq k\}$. Since $V$ is invariant and $X$ is not invariant under $\omega$, we obtain \begin{eqnarray*} 
\dim_\C \frac{\mathcal{O}_n}{\omega(\Theta_X^T) + I_V} &=& \dim_\C \frac{\mathcal{O}_n}{\langle \phi, \frac{\partial \phi}{\partial x_j} A_k - \frac{\partial \phi}{\partial x_k} A_j, f \rangle_{(j,k) \in \Lambda}} \\ 
& & + \dim_\C \frac{\mathcal{O}n}{\langle A_1, \ldots, A_n, \frac{\partial \phi}{\partial x_j} A_k - \frac{\partial \phi}{\partial x_k} A_j, f \rangle_{(j,k) \in \Lambda}}, 
\end{eqnarray*} 
by Lemma \ref{newlema61}. Noting that $\frac{\partial \phi}{\partial x_j} A_k - \frac{\partial \phi}{\partial x_k} A_j \in \langle A_1, \ldots, A_n \rangle$, and using (\ref{gsv}), we deduce \begin{eqnarray*} 
\dim_\C \frac{\mathcal{O}_n}{\omega(\Theta_X^T) + I_V} &=& \operatorname{Ind}_{\operatorname{GSV}} (\omega; X, V, 0) + \dim_\C \frac{\mathcal{O}_n}{\langle A_1, \ldots, A_n, f \rangle}. \end{eqnarray*} 
Thus, by (\ref{tjurina}), we have 
\begin{equation}\label{eq_10} \dim_\C \frac{\mathcal{O}_n}{\omega(\Theta_X^T) + I_V} = \operatorname{Ind}_{\operatorname{GSV}} (\omega; X, V,0) + \tau_0(\omega, V). 
\end{equation} 
Finally, from (\ref{eq_prin}), (\ref{eq_inter}), and (\ref{eq_10}), we conclude
\[\tau_{BR}(\omega,X,V)=\operatorname{Ind}_{\operatorname{GSV}}(\omega;X,V)+ \tau_0(\omega,V)-\dim_{\mathbb{C}}\dfrac{\omega(\Theta_X)}{\omega(\Theta_X^T)}+\dim_{\mathbb{C}}\dfrac{\omega(\Theta_X)\cap I_V}{\omega(\Theta_X^T)\cap I_V}.\]
Using (\ref{eq_tu}), where $\displaystyle\tau_0(X) = \dim_\C \frac{\omega(\Theta_X)}{\omega(\Theta_X^T)}$, we complete the proof of Theorem \ref{2mainresult}. 
\end{proof}
We present examples in dimension three and two where Theorem \ref{2mainresult} is verified. 
\begin{example}
    Let $X=\{ \phi(x,y,z)=x^3+yz=0 \}$ and $V=\{ f(x,y,z)=x^2+y^2+z^2=0 \}$ be germs of isolated complex hypersurfaces on $(\C^3,0)$. Consider \[\omega=df+f(zdx+xdy+ydz)=(2x+zf)dx+(2y+xf)dy+(2z+yf)dz.\] Clearly $V$ is invariant by $\omega$, and since
    \begin{align*}
        \omega \wedge d \phi&=[((2x+zf)dx+(2y+xf)dy+(2z+yf)dz)] \wedge (3x^2dx+zdy+ydz) \\
        &= (2xz-6x^2y+(z^2-3x^2)f) dx \wedge dy + (2y^2-2z^2+(xy-yz)f) dy \wedge dz \\
        &+ (2xy-6x^2+(yz-3x^2y)f) dx \wedge dz,
    \end{align*}
    we have $X$ is not invariant by $\omega$. Additionally, by \cite[Theorem 2.6]{Bivia2020} we have
    \begin{align*}
    \Theta_X=\left\langle z\dfrac{\partial}{\partial x}-3x^2\dfrac{\partial}{\partial y}, \ y\dfrac{\partial}{\partial x}-3x^2\dfrac{\partial}{\partial z}, \ y\dfrac{\partial}{\partial y}-z\dfrac{\partial}{\partial z}, \ x\dfrac{\partial}{\partial x}+2y\dfrac{\partial}{\partial y}+z\dfrac{\partial}{\partial z} \right\rangle .   
    \end{align*}
    Using SINGULAR \cite{DGPS} to compute the indices, we get $\tau_{BR}(\omega,X,V)=5$, $\tau_0(\omega,V)=1$, $\operatorname{Ind}_{\operatorname{GSV}}(\omega;X,V,0)=5$, \begin{equation}\label{tau_rest}
    \tau_0(X)=2\text{ and }\dim_{\mathbb{C}}\dfrac{\omega(\Theta_X)+ I_V}{\omega(\Theta_X^T)+ I_V}=1.\end{equation}
    By \eqref{eq_inter} and \eqref{tau_rest} we have
    \[\dim_{\mathbb{C}}\dfrac{\omega(\Theta_X)\cap I_V}{\omega(\Theta_X^T)\cap I_V}=1.\]
    Hence, Theorem \ref{2mainresult} is satisfied, since
    \begin{align*}
        5=\tau_{BR}(\omega,X,V)&=\operatorname{Ind}_{\operatorname{GSV}}(\omega;X,V,0)+ \tau_0(\omega,V)-\tau_0(X)+\dim_{\mathbb{C}}\dfrac{\omega(\Theta_X)\cap I_V}{\omega(\Theta_X^T)\cap I_V}\\
&=5+1-2+1.
    \end{align*}
\end{example}

\begin{example} \label{ex1}
 Let $X=\{ \phi(x,y)=y^p-x^q=0 \}$ and $V=\{ f(x,y)=xy=0 \}$, representing germs of complex analytic curves on $(\C^2,0)$. Let $\F$ be the foliation defined  $\omega=\lambda xdy+ydx$, with $\lambda \neq -\displaystyle\frac{p}{q}, \ \lambda \neq 1$. Note that $X$ is not invariant by $\F$, while $V$ is invariant by $\F$, as seen from the following calculations:
\begin{align*}
\omega \wedge d \phi &= (\lambda xdy+ydx) \wedge (py^{p-1}dy -qx^{q-1}dx) = (py^p +\lambda q x^q) \ dx \wedge dy, \\
\omega \wedge df &= (\lambda xdy+ydx) \wedge (ydx+xdy) = ((1-\lambda) xy) \ dx \wedge dy.
\end{align*}
Since $\lambda \neq -\displaystyle\frac{p}{q}, \ \lambda \neq 1$, we find that $\tau_0(\omega,V)=1$ and $\tau_0(X)=(p-1)(q-1)$. Next, we compute the remaining indices. First, observe that
\begin{align*}
\dfrac{\omega(\Theta_X)+I_V}{\omega(\Theta_X^T)+I_V} =
\dfrac{\langle \ (p+\lambda q)xy, \ py^p+\lambda qx^q,xy \ \rangle}{\langle \ y(y^p-x^q), \ \lambda x (y^p-x^q),\ py^p+\lambda qx^q,xy \  \rangle} =
\dfrac{I}{\langle y^{p+1},x^{q+1} \rangle+I},
\end{align*}
where $I=\langle xy,py^p+\lambda qx^q \rangle$. Since, $y^p= \alpha x^q$ on $I$, with $\alpha \in \C$, we have:
\begin{align*}
y^{p+1}=y \cdot y^p=y \cdot \alpha x^q \in I,
\end{align*}
and similarly, $x^{q+1} \in I$. Thus, $\dim_{\C} \dfrac{\omega(\Theta_X)+I_V}{\omega(\Theta_X^T)+I_V}=0$. 
From equation (\ref{eq_inter}), we have:
\begin{align*}
\dim_{\mathbb{C}}\dfrac{\omega(\Theta_X)\cap I_V}{\omega(\Theta_X^T)\cap I_V}=\dim_{\mathbb{C}}\dfrac{\omega(\Theta_X)}{\omega(\Theta_X^T)}=\tau_0(X)=(p-1)(q-1).
\end{align*}
Now, for $\operatorname{Ind}_{\operatorname{GSV}}(\omega;X,V,0)$, note that
\begin{align*}
\operatorname{Ind}_{\operatorname{GSV}}(\omega;X,V,0)=
\dim_{\C} \dfrac{\mathcal{O}_2}{\langle y^p-x^q,py^p+\lambda q x^q,xy \rangle}.
\end{align*}
Rewriting the ideal, we get
\begin{align*}
\langle y^p-x^q,py^p+\lambda q x^q,xy \rangle 
&=\langle y^p-x^q,p(y^p-x^q)+px^q+\lambda q x^q,xy \rangle \\
&=\langle y^p-x^q,(p+\lambda q) x^q,xy \rangle \\
&=\langle y^p,x^q,xy \rangle,
\end{align*}
therefore
\begin{align*}
\operatorname{Ind}_{\operatorname{GSV}}(\omega;X,V,0)=
\dim_{\C} \dfrac{\mathcal{O}_2}{\langle y^p-x^q,py^p+\lambda q x^q,xy \rangle}=\dim_{\C} \dfrac{\mathcal{O}_2}{\langle y^p,x^q,xy \rangle}=p+q-1.
\end{align*}
Finally, let's compute the Bruce-Roberts Tjurina number. From \cite[Example 1]{Saito1}, we know
\begin{align*}
\Theta_X=\left\langle qy \frac{\partial}{\partial y}+px \frac{\partial}{\partial x}, py^{p-1} \frac{\partial}{\partial x}+qx^{q-1} \frac{\partial}{\partial y} \right\rangle.
\end{align*}
Hence, we have $\tau_{BR}(\omega,X,V)=p+q$. Since
\begin{eqnarray*}
\tau_{BR}(\omega,X,V)&=&\operatorname{Ind}_{\operatorname{GSV}}(\omega;X,V,0)+ \tau_0(\omega,V)-\tau_0(X)+\dim_{\mathbb{C}}\dfrac{\omega(\Theta_X)\cap I_V}{\omega(\Theta_X^T)\cap I_V}\\
&=& (p+q-1)+1-(p-1)(q-1)+(p-1)(q-1)
\end{eqnarray*}
we conclude that Theorem \ref{2mainresult} is verified.
\end{example}

\section{The Bruce-Roberts Tjurina number and the Bruce-Roberts number}\label{bruce}
Let $(X,0)$ be a complex analytic subvariety and $\omega$ be a germ of a holomorphic 1-form with isolated singularity at $0\in\C^n$ such that $\mu_{BR}(\omega,X)<\infty$. We consider $V=\{f=0\}$ a germ of complex hypersurface invariant by $\omega$. We also denote
 $\Theta_V^{\omega}=\{\delta\in\Theta_n:\omega(\delta)\in \langle f\rangle\}$, and by $H_\omega=\{\zeta\in\Theta_n:\omega(\zeta)=0\}$ the submodule of vector fields tangent to $\omega$. Note that $H_{\omega}\subset \Theta_V^{\omega}$. 
 Inspired by Bivi\`a-Ausina--Kourliouros--Ruas \cite{Bivia2024}, we define the following numbers
 \[\overline{\mu}_X(\omega):=\dim_\C\frac{\Theta_n}{\Theta_X+H_{\omega}}\]
 and 
 \[\overline{\tau}_X(\omega,V)=\dim_\C\frac{\Theta_n}{\Theta_X+\Theta_{V}^{\omega}}\]
in case where these numbers are finite. 
Note that $\overline{\mu}_X(\omega)\geq \overline{\tau}_X(\omega,V)$, and 
\[\overline{\mu}_X(\omega)-\overline{\tau}_X(\omega,V)=\dim_\C\frac{\Theta_{V}^{\omega}}{H_\omega+(\Theta_X\cap\Theta_V^{\omega})}\]
\begin{proposition}\label{prop1}
    Let $(\omega,X)$ be a pair in $(\C^n,0)$ with $\mu_{BR}(\omega,X)<\infty$. Suppose that $V$ is a complex hypersurface with an isolated singularity at $0\in\C^n$ invariant by $\omega$. Then
\begin{eqnarray*}
    \mu_{BR}(\omega,X)&=&\mu_0(\omega)+\overline{\mu}_X(\omega)\\
    \tau_{BR}(\omega,X,V)&=&\tau_0(\omega,V)+\overline{\tau}_X(\omega,V)
\end{eqnarray*}
In particular, 
\[\mu_{BR}(\omega,X)-\tau_{BR}(\omega,X,V)=\mu_0(\omega)-\tau_0(\omega,V)+\overline{\mu}_X(\omega)-\overline{\tau}_X(\omega,V).\]
\end{proposition}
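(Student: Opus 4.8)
The plan is to prove the two displayed identities separately and then obtain the ``in particular'' statement by subtraction. The common engine is the $\mathcal{O}_n$-linear evaluation map $\omega\colon\Theta_n\to\mathcal{O}_n$, $\delta\mapsto\omega(\delta)$, whose kernel is exactly $H_\omega$ and whose image is $\omega(\Theta_n)=\langle A_1,\ldots,A_n\rangle$ (evaluating on $\partial/\partial x_j$ recovers $A_j$), so that $\mu_0(\omega)=\dim_\C\mathcal{O}_n/\omega(\Theta_n)$. I would first record finiteness: every ideal appearing below contains $\omega(\Theta_X)$, so every quotient of $\mathcal{O}_n$ in the argument is a further quotient of $\mathcal{O}_n/\omega(\Theta_X)$ and therefore finite-dimensional by the hypothesis $\mu_{BR}(\omega,X)<\infty$; the two ``bar'' numbers are then finite as well, since the isomorphisms below identify them with subquotients of these spaces.

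For the first identity I would use the inclusion chain $\omega(\Theta_X)\subseteq\omega(\Theta_n)\subseteq\mathcal{O}_n$ and additivity of dimension to write
\[\mu_{BR}(\omega,X)=\dim_\C\frac{\mathcal{O}_n}{\omega(\Theta_n)}+\dim_\C\frac{\omega(\Theta_n)}{\omega(\Theta_X)}=\mu_0(\omega)+\dim_\C\frac{\omega(\Theta_n)}{\omega(\Theta_X)}.\]
The evaluation map induces an isomorphism $\Theta_n/H_\omega\cong\omega(\Theta_n)$ under which the submodule $(\Theta_X+H_\omega)/H_\omega$ corresponds to $\omega(\Theta_X)$, because $\omega^{-1}(\omega(\Theta_X))=\Theta_X+H_\omega$. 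Hence $\omega(\Theta_n)/\omega(\Theta_X)\cong\Theta_n/(\Theta_X+H_\omega)$, whose dimension is $\overline{\mu}_X(\omega)$, giving the first formula.

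For the second identity I would run the same argument one level down, modulo $I_V=\langle f\rangle$. Composing $\omega$ with the projection $\mathcal{O}_n\to\mathcal{O}_n/\langle f\rangle$ yields an $\mathcal{O}_n$-linear map $\bar\omega\colon\Theta_n\to\mathcal{O}_n/\langle f\rangle$ with kernel exactly $\Theta_V^\omega=\{\delta:\omega(\delta)\in\langle f\rangle\}$ and image $(\omega(\Theta_n)+\langle f\rangle)/\langle f\rangle$. Using the chain $\omega(\Theta_X)+\langle f\rangle\subseteq\omega(\Theta_n)+\langle f\rangle\subseteq\mathcal{O}_n$ together with the definitions of $\tau_{BR}(\omega,X,V)$ and $\tau_0(\omega,V)$, I obtain
\[\tau_{BR}(\omega,X,V)=\tau_0(\omega,V)+\dim_\C\frac{\omega(\Theta_n)+\langle f\rangle}{\omega(\Theta_X)+\langle f\rangle}.\]
Exactly as before, $\bar\omega$ induces $\Theta_n/\Theta_V^\omega\cong(\omega(\Theta_n)+\langle f\rangle)/\langle f\rangle$ sending $(\Theta_X+\Theta_V^\omega)/\Theta_V^\omega$ onto $(\omega(\Theta_X)+\langle f\rangle)/\langle f\rangle$, so the remaining quotient is isomorphic to $\Theta_n/(\Theta_X+\Theta_V^\omega)$, of dimension $\overline{\tau}_X(\omega,V)$. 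Subtracting the two identities yields the final displayed equation.

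The computation is almost entirely a formal application of the isomorphism theorems, so I do not expect a serious obstacle. The one step demanding care is the submodule correspondence: verifying that the preimage of $\omega(\Theta_X)$ under $\omega$ (respectively of its image under $\bar\omega$) is precisely $\Theta_X+H_\omega$ (respectively $\Theta_X+\Theta_V^\omega$), which relies on computing the kernels correctly and on the inclusion $H_\omega\subseteq\Theta_V^\omega$. Tracking finiteness through each isomorphism is routine but must be stated, and it is guaranteed by $\mu_{BR}(\omega,X)<\infty$.
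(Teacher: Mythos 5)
Your proof is correct and is essentially the paper's own argument: the paper packages the same computation into two short exact sequences, $0 \to \Theta_n/(\Theta_X+H_\omega) \to \mathcal{O}_n/\omega(\Theta_X) \to \mathcal{O}_n/\langle\omega\rangle \to 0$ and its analogue modulo $\langle f\rangle$, which is exactly your combination of the first isomorphism theorem for the evaluation map with additivity of dimension along the inclusion chains. The submodule correspondences you single out as the delicate step, $\omega^{-1}(\omega(\Theta_X)) = \Theta_X+H_\omega$ and its counterpart $\Theta_X+\Theta_V^\omega$ modulo $\langle f\rangle$, are precisely what make the paper's sequences exact, so the two write-ups coincide in substance.
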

\begin{proof}
Let $\omega=\displaystyle\sum_{j=1}^{n}A_j dx_j$ and set $\langle\omega\rangle=\langle A_1,\ldots,A_n\rangle$. Then, 
    the proof follows from the following exact sequences of $\mathcal{O}_n$-modules:
    \[0\longrightarrow\dfrac{\Theta_n}{\Theta_X+H_{\omega}}{\overset {\cdot\omega}{\longrightarrow}}\dfrac{\mathcal{O}_n}{\omega(\Theta_X)}{\overset {\pi}{\longrightarrow}}\dfrac{\mathcal{O}_n}{\langle\omega\rangle}\longrightarrow0\]
     \[0\longrightarrow\dfrac{\Theta_n}{\Theta_X+\Theta^{\omega}_{V}}{\overset {\cdot\omega}{\longrightarrow}}\dfrac{\mathcal{O}_n}{\omega(\Theta_X)+\langle f\rangle}{\overset {\pi}{\longrightarrow}}\dfrac{\mathcal{O}_n}{\langle\omega\rangle+\langle f\rangle}\longrightarrow0,\]
where $\cdot\omega$ is the evaluation map and $\pi$ is induced by the inclusion $\omega(\Theta_X)\subseteq\langle\omega\rangle$.
\end{proof}
As a consequence, we obtain an algebraic characterization of the equality of the Bruce-Roberts Milnor and Tjurina numbers. 
\begin{corollary}\label{coro4}
Let $(\omega,X)$ be a pair in $(\C^n,0)$ with $\mu_{BR}(\omega,X)<\infty$. Suppose that $V$ is a complex hypersurface with an isolated singularity at $0\in\C^n$ invariant by $\omega$. Then the following conditions are equivalent
\begin{enumerate}
    \item $\mu_{BR}(\omega,X)=\tau_{BR}(\omega,X,V)$
    \item $\mu_0(\omega)=\tau_0(\omega,V)$ and $\overline{\mu}_X(\omega)=\overline{\tau}_X(\omega,V)$, the last equation is equivalent to \[\Theta_V^{\omega}=H_\omega+\Theta_X\cap\Theta_V^{\omega}.\]
\end{enumerate}
\end{corollary}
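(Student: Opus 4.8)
The plan is to obtain everything as a formal consequence of Proposition \ref{prop1} together with two elementary monotonicity facts. Proposition \ref{prop1} supplies the additive decompositions $\mu_{BR}(\omega,X)=\mu_0(\omega)+\overline{\mu}_X(\omega)$ and $\tau_{BR}(\omega,X,V)=\tau_0(\omega,V)+\overline{\tau}_X(\omega,V)$, hence the difference splits as
\[\mu_{BR}(\omega,X)-\tau_{BR}(\omega,X,V)=\bigl(\mu_0(\omega)-\tau_0(\omega,V)\bigr)+\bigl(\overline{\mu}_X(\omega)-\overline{\tau}_X(\omega,V)\bigr).\]
The whole strategy rests on showing that each of the two parenthesized differences is a nonnegative integer; once this is done, the total difference vanishes if and only if both summands vanish, which is precisely the equivalence $(1)\Leftrightarrow(2)$.

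First I would establish $\mu_0(\omega)\geq\tau_0(\omega,V)$. Writing $\omega=\sum_{j}A_j\,dx_j$ and $\langle\omega\rangle=\langle A_1,\ldots,A_n\rangle$, from the exact sequences in the proof of Proposition \ref{prop1} one reads $\mu_0(\omega)=\dim_\C\mathcal{O}_n/\langle\omega\rangle$, while $\tau_0(\omega,V)=\dim_\C\mathcal{O}_n/(\langle\omega\rangle+\langle f\rangle)$ by \eqref{tjurina}. The inclusion $\langle\omega\rangle\subseteq\langle\omega\rangle+\langle f\rangle$ yields a surjection $\mathcal{O}_n/\langle\omega\rangle\twoheadrightarrow\mathcal{O}_n/(\langle\omega\rangle+\langle f\rangle)$, giving the inequality. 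Likewise, the inclusion $H_\omega\subseteq\Theta_V^\omega$ forces $\Theta_X+H_\omega\subseteq\Theta_X+\Theta_V^\omega$, so the induced surjection $\Theta_n/(\Theta_X+H_\omega)\twoheadrightarrow\Theta_n/(\Theta_X+\Theta_V^\omega)$ gives $\overline{\mu}_X(\omega)\geq\overline{\tau}_X(\omega,V)$. Moreover this second difference is computed exactly by the identity displayed just before the corollary,
\[\overline{\mu}_X(\omega)-\overline{\tau}_X(\omega,V)=\dim_\C\frac{\Theta_V^\omega}{H_\omega+(\Theta_X\cap\Theta_V^\omega)},\]
which follows from the short exact sequence relating the two quotients together with Lemma \ref{re1} applied to $M_1=\Theta_X$, $M_2=\Theta_V^\omega$, $M_3=H_\omega$ (legitimate since $H_\omega\subseteq\Theta_V^\omega$).

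With both differences nonnegative, $(1)\Leftrightarrow(2)$ is then immediate: vanishing of the left-hand side forces simultaneous vanishing of $\mu_0(\omega)-\tau_0(\omega,V)$ and $\overline{\mu}_X(\omega)-\overline{\tau}_X(\omega,V)$, and conversely. Finally, the reformulation of the condition $\overline{\mu}_X(\omega)=\overline{\tau}_X(\omega,V)$ as $\Theta_V^\omega=H_\omega+(\Theta_X\cap\Theta_V^\omega)$ is read off from the displayed dimension formula above, since that quotient is zero-dimensional precisely when the two submodules coincide. I do not expect a genuine obstacle in this argument; the only point demanding care will be confirming that all four auxiliary numbers $\mu_0(\omega)$, $\tau_0(\omega,V)$, $\overline{\mu}_X(\omega)$, $\overline{\tau}_X(\omega,V)$ are finite under the standing hypothesis $\mu_{BR}(\omega,X)<\infty$, so that subtracting dimensions is meaningful — but this finiteness is exactly what the exact sequences underlying Proposition \ref{prop1} guarantee, so it can be invoked directly.
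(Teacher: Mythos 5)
Your proposal is correct and follows essentially the same route as the paper: the paper states Corollary \ref{coro4} as an immediate consequence of Proposition \ref{prop1} together with the remarks preceding it (the inequality $\overline{\mu}_X(\omega)\geq\overline{\tau}_X(\omega,V)$ and the dimension formula for their difference), and your argument simply writes out those details — the nonnegativity of both summands via the evident surjections, and the identification of $\overline{\mu}_X(\omega)-\overline{\tau}_X(\omega,V)$ with $\dim_\C\Theta_V^\omega/(H_\omega+(\Theta_X\cap\Theta_V^\omega))$ via Lemma \ref{re1}, which the paper asserts without proof. Your finiteness check, using that both decompositions in Proposition \ref{prop1} consist of nonnegative terms bounded by $\mu_{BR}(\omega,X)<\infty$, is exactly the justification needed to make the subtraction legitimate.
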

\par When $\omega=df$, for some $f\in\mathcal{O}_n$, and $\mu_{BR}(f,X)<\infty$, we recover \cite[Corollary 2.2]{Bivia2024}. 
\par In the case that $X$ is an isolated complex hypersurface at $0\in\C^n$, we obtain the following corollary:
\begin{corollary}
   Let $(\omega,X)$ be a pair in $(\C^n,0)$ with $\mu_{BR}(\omega,X)<\infty$. Suppose that $V$ is a complex hypersurface with an isolated singularity at $0\in\C^n$ invariant by $\omega$. Then \[    \overline{\mu}_X(\omega)=\operatorname{Ind}_{\operatorname{GSV}}(\omega; X, 0)-\tau_0(X)\]
    and 
    \[
\overline{\tau}_X(\omega,V)=\operatorname{Ind}_{\operatorname{GSV}}(\omega;X,V,0)-\tau_0(X)+\dim_{\mathbb{C}}\dfrac{\omega(\Theta_X)\cap \langle f\rangle}{\omega(\Theta_X^T)\cap \langle f\rangle}\]
\end{corollary}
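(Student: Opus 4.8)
The plan is to read off both identities from Proposition \ref{prop1}, which rewrites the two ambient invariants as
\[\overline{\mu}_X(\omega)=\mu_{BR}(\omega,X)-\mu_0(\omega),\qquad \overline{\tau}_X(\omega,V)=\tau_{BR}(\omega,X,V)-\tau_0(\omega,V),\]
and then to substitute the corresponding $\operatorname{GSV}$-type decompositions of $\mu_{BR}$ and $\tau_{BR}$. Before starting I would record two preliminary observations. First, the hypothesis $\mu_{BR}(\omega,X)<\infty$ forces $X$ to be non-invariant by $\omega$: were $X$ invariant we would have $\omega(\Theta_X)\subseteq I_X$, so $\mathcal{O}_n/\omega(\Theta_X)$ would surject onto the infinite-dimensional ring $\mathcal{O}_n/I_X$. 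Hence both $\operatorname{Ind}_{\operatorname{GSV}}(\omega;X,0)$ and $\operatorname{Ind}_{\operatorname{GSV}}(\omega;X,V,0)$ are defined. Second, since $X$ is an isolated hypersurface we may write $X=\{\phi=0\}$ with $\phi$ a single function ($k=1$), and since $V=\{f=0\}$ we have $I_V=\langle f\rangle$.

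For the second identity almost no work is required. Substituting the formula of Theorem \ref{2mainresult} into $\overline{\tau}_X(\omega,V)=\tau_{BR}(\omega,X,V)-\tau_0(\omega,V)$, the two copies of $\tau_0(\omega,V)$ cancel and, using $I_V=\langle f\rangle$, I obtain exactly
\[\overline{\tau}_X(\omega,V)=\operatorname{Ind}_{\operatorname{GSV}}(\omega;X,V,0)-\tau_0(X)+\dim_{\C}\frac{\omega(\Theta_X)\cap\langle f\rangle}{\omega(\Theta_X^T)\cap\langle f\rangle}.\]

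For the first identity I would establish the Milnor counterpart of the key computation (\ref{eq_10}), namely
\[\dim_\C\frac{\mathcal{O}_n}{\omega(\Theta_X^T)}=\operatorname{Ind}_{\operatorname{GSV}}(\omega;X,0)+\mu_0(\omega).\]
This is done by repeating, verbatim but without the term $\langle f\rangle$, the argument in the proof of Theorem \ref{2mainresult}: use (\ref{eq_omega}) with $k=1$ to present $\omega(\Theta_X^T)$ as the ideal generated by the $2\times 2$ minors of $\begin{pmatrix}\omega\\ d\phi\end{pmatrix}$ together with the products $\phi A_j$; split off the factor $\phi$ by Lemma \ref{newlema61} (legitimate because $X$ is non-invariant); recognize the first summand as $\operatorname{Ind}_{\operatorname{GSV}}(\omega;X,0)$ and, since each minor lies in $\langle A_1,\dots,A_n\rangle=\langle\omega\rangle$, the second summand as $\mu_0(\omega)=\dim_\C\mathcal{O}_n/\langle\omega\rangle$. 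Combining this with the exact sequence $0\to \omega(\Theta_X)/\omega(\Theta_X^T)\to \mathcal{O}_n/\omega(\Theta_X^T)\to \mathcal{O}_n/\omega(\Theta_X)\to 0$ and the identity $\dim_\C\,\omega(\Theta_X)/\omega(\Theta_X^T)=\tau_0(X)$ from (\ref{eq_tu}) gives $\mu_{BR}(\omega,X)=\operatorname{Ind}_{\operatorname{GSV}}(\omega;X,0)+\mu_0(\omega)-\tau_0(X)$; subtracting $\mu_0(\omega)$ as in Proposition \ref{prop1} yields the first claimed formula.

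The only genuinely delicate point is the application of Lemma \ref{newlema61} in the Milnor computation, precisely as in Theorem \ref{2mainresult}: one must know that $\phi$ is relatively prime to the minors being split off, which is exactly where non-invariance of $X$ (guaranteed by $\mu_{BR}(\omega,X)<\infty$) enters. Everything else is formal bookkeeping with exact sequences, so I would expect the write-up to be short, essentially reusing the commutative-algebra computation already carried out for Theorem \ref{2mainresult}.
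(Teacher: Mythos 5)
Your proposal is correct and follows the same basic reduction as the paper: both identities come from comparing Proposition \ref{prop1} with GSV-type decompositions of $\mu_{BR}(\omega,X)$ and $\tau_{BR}(\omega,X,V)$, the second identity being the immediate cancellation of $\tau_0(\omega,V)$ against Theorem \ref{2mainresult}. The one place you diverge is the first identity: the paper simply quotes the decomposition $\mu_{BR}(\omega,X)=\mu_0(\omega)+\operatorname{Ind}_{\operatorname{GSV}}(\omega;X,0)-\tau_0(X)$ from \cite[Theorem 1]{Barbosa2024}, whereas you re-derive it by rerunning the proof of Theorem \ref{2mainresult} with the term $\langle f\rangle$ deleted, i.e.\ via (\ref{eq_omega}), Lemma \ref{newlema61}, the standard exact sequence, and (\ref{eq_tu}). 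That derivation is sound and is in fact how the cited result is proved, so your write-up is self-contained where the paper's is citation-based; the cost is that you must handle yourself the delicate point you correctly flag (applicability of Lemma \ref{newlema61}, i.e.\ relative primeness of $\phi$ with the elements being split off, coming from non-invariance of $X$), which the citation would let you bypass. A genuine merit of your version is the preliminary observation that $\mu_{BR}(\omega,X)<\infty$ forces $X$ to be non-invariant by $\omega$ (since invariance gives $\omega(\Theta_X)\subseteq I_X$ and hence infinite codimension): this is exactly what makes the GSV-indices well defined and Theorem \ref{2mainresult} applicable, a hypothesis check the paper's two-line proof leaves implicit.
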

\begin{proof}
It follows from \cite[Theorem 1]{Barbosa2024} and  Theorem \ref{2mainresult} that 
\begin{equation}\label{eq_b}
\mu_{BR}(\omega,X)=\mu_0(\omega)+\operatorname{Ind}_{\operatorname{GSV}}(\omega; X, 0)-\tau_0(X)
\end{equation}
\begin{equation}\label{eq_t}
\tau_{BR}(\omega,X,V)=\operatorname{Ind}_{\operatorname{GSV}}(\omega;X,V,0)+ \tau_0(\omega,V)-\tau_0(X)+\dim_{\mathbb{C}}\dfrac{\omega(\Theta_X)\cap I_V}{\omega(\Theta_X^T)\cap I_V}.
\end{equation}
We conclude the proof by comparing (\ref{eq_b}) and (\ref{eq_t}) with Proposition \ref{prop1}.
\end{proof}

Now, we denote by $r_{f}(\omega(\Theta_X))$ the minimum of $r \in \mathbb{Z}_{\geq 1}$ such that $f^r \in \omega(\Theta_X)$. If such $r$does not exist, we set then $r_{f}(\omega(\Theta_X))=\infty$.
By applying \cite[Theorem 3.2]{Bivia2020}, we get the following corollary:

\begin{corollary} \label{biviasruas}
    Let $X$ be a complex analytic subvariety of $(\C^n,0)$. Let $\omega\in\Omega^1(\C^n,0)$ such that $\mu_{BR}(\omega,X)<\infty$. Suppose that $(V,0)$ is determined by $f:(\mathbb{C}^n,0)\to(\C,0)$. Then 
    \[\frac{\mu_{BR}(\omega,X)}{\tau_{BR}(\omega,X,V)}\leq r_{f}(\omega(\Theta_X)).\]
\end{corollary}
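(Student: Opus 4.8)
The plan is to reduce the statement to a purely commutative-algebra inequality about the colength of an ideal after successively cutting by the powers of $f$, which is precisely the content of \cite[Theorem 3.2]{Bivia2020} applied to the ideal $\omega(\Theta_X)$ and the germ $f$. Writing $I=\omega(\Theta_X)$ and recalling that here $I_V=\langle f\rangle$, the two quantities in play are $\mu_{BR}(\omega,X)=\dim_\C \mathcal{O}_n/I$ and $\tau_{BR}(\omega,X,V)=\dim_\C \mathcal{O}_n/(I+\langle f\rangle)$, and the hypothesis $\mu_{BR}(\omega,X)<\infty$ guarantees that $B:=\mathcal{O}_n/I$ is a finite-dimensional $\C$-vector space. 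If $r_f(\omega(\Theta_X))=\infty$ there is nothing to prove, so I would assume $r:=r_f(\omega(\Theta_X))<\infty$, that is, $f^{r}\in I$.

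The heart of the argument is the $f$-adic filtration of $B$. Multiplication by $f$ is a $\C$-linear endomorphism of $B$ that is nilpotent of index at most $r$, since $f^{r}=0$ in $B$. This produces the finite filtration
\[
B\supseteq fB\supseteq f^{2}B\supseteq\cdots\supseteq f^{r}B=0,
\]
so that $\dim_\C B=\sum_{i=0}^{r-1}\dim_\C\big(f^{i}B/f^{i+1}B\big)$. The key observation is that for each $i$ the assignment $[b]\mapsto[f^{i}b]$ defines a surjection $B/fB\twoheadrightarrow f^{i}B/f^{i+1}B$, whence $\dim_\C(f^{i}B/f^{i+1}B)\le\dim_\C(B/fB)$ for every $i$. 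Combining this with the identification $B/fB\cong \mathcal{O}_n/(I+\langle f\rangle)$, whose dimension is exactly $\tau_{BR}(\omega,X,V)$, and summing over the filtration yields
\[
\mu_{BR}(\omega,X)=\dim_\C B\le r\cdot\dim_\C(B/fB)=r_f(\omega(\Theta_X))\cdot\tau_{BR}(\omega,X,V),
\]
which is the claimed inequality after dividing by $\tau_{BR}(\omega,X,V)$.

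The only genuinely delicate points are bookkeeping ones: first, that the two Bruce-Roberts invariants are the colengths of $I$ and $I+\langle f\rangle$, which is immediate from the definitions once $I_V=\langle f\rangle$; and second, that $B/fB$ is the cokernel of multiplication by $f$, i.e. $fB=(I+\langle f\rangle)/I$. Since all of this is subsumed by \cite[Theorem 3.2]{Bivia2020}, I expect the write-up to consist mainly of specializing that theorem to $\omega(\Theta_X)$ and $f$ and recording the translation of the resulting colength ratio into the language of $\mu_{BR}$ and $\tau_{BR}$. No step should present a serious obstacle; the only subtlety is confirming that the result of Bivi\`a-Ausina--Ruas applies verbatim in the non-exact $1$-form setting, which it does, because its proof uses only that $\omega(\Theta_X)$ is an ideal of $\mathcal{O}_n$ of finite colength.
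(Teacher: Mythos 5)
Your proposal is correct and takes essentially the same approach as the paper: the paper's entire proof consists of invoking \cite[Theorem 3.2]{Bivia2020} applied to the finite-colength ideal $\omega(\Theta_X)$ and the germ $f$, exactly the reduction you describe. Your $f$-adic filtration argument is a valid self-contained justification of that cited theorem, so nothing is missing.
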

We establish an example that illustrates Corollary \ref{biviasruas}. 
\begin{example}
Let $V=\{ f(x,y)=x^{2m+1}+x^my^{m+1}+y^{2m}=0 \}$ and $X=\{ \phi(x,y)=xy=0 \}$ be germs of complex analytic curves at $0\in\C^2$. 
Consider the foliation $\F$ at $(\C^2,0)$ defined by \[\omega=(f_x+yf)dx+(f_y+xf)dy,\] 
 Note that $X$ is not invariant by $\F$, while $V$ is invariant by $\F$. It is also evident that $\omega=df+f(xdy+ydx)$. Thus, since $\Theta_X=\left\langle x \dfrac{\partial}{\partial x},y \dfrac{\partial}{\partial y} \right\rangle$, we have
\begin{align*}
    \omega(\Theta_X)=\langle xf_x+xyf,yf_y+xyf \rangle.
\end{align*}
We can use SINGULAR \cite{DGPS} to compute, the indices $\mu_{BR}(\omega,X)$ and $\tau_{BR}(\omega,X,V)$. For each value of $m$, we found the following results:
\begin{center}
\begin{tabular}{| l | r | r | r |}
\hline
$m$ & $\mu_{BR}(\omega,X)$ & $\tau_{BR}(\omega,X,V)$ & $\dfrac{\mu_{BR}(\omega,X)}{\tau_{BR}(\omega,X)}$\\
\hline
1 & 6 & 6 & 1\\
2 & 20 & 17 & 1.17647...\\
3 & 42 & 34 & 1.23529...\\
4 & 72 & 57 & 1.26315...\\
10 & 420 & 321 & 1.30841...\\
20 & 1640 & 1241 & 1.32151...\\
1000 & 4002000 & 3002001 & 1.33311...\\
\hline
\end{tabular}
\end{center}
Additionally, in any of the cases above, we observe that $f \not\in \omega(\Theta_X)$, but $f^2 \in \omega(\Theta_X)$. Therefore, Corollary \ref{biviasruas} is satisfied, as
\begin{align}\label{eq_curves}
    \frac{\mu_{BR}(\omega,X)}{\tau_{BR}(\omega,X,V)} < \frac{4}{3}\leq 2=r_{f}(\omega(\Theta_X)).
\end{align}
\end{example}
\begin{remark}
In the context of singular curves, 
we observe that 
if $V=\{f=0\}$ is a reduced complex analytic curve, $\omega=df$, and $X=\C^n$, the inequality (\ref{eq_curves}) reminds us:
\[\frac{\mu_0(V)}{\tau_0(V)}<\frac{4}{3}\]
which is exactly the \textit{Dimca-Greuel inequality} proposed in \cite{Dimca-Greuel} and recently proved in \cite{Almiron}. However, in the case of holomorphic foliations on $(\C^2,0)$, the Dimca-Greuel inequality does not generally hold, as shown in \cite[Example 4.3]{FPGBSM24}.  
\end{remark}

\section{Applications to holomorphic foliations in complex dimension two}\label{fol}
In this section, we apply our results to holomorphic foliations on $(\C^2,0)$. For a general overview of Foliation theory, we refer the reader to \cite{Brunella-book}.
\par Let $\F:\omega=0$ be a germ of a singular holomorphic foliation at $0\in\C^2$. Let $V=\{f=0\}$ be an $\F$-invariant curve, where $f\in\mathcal{O}_2$. Then, there exists $g,h\in\mathcal{O}_2$, with $f$ relative prime to $g$ and $h$ respectively, and a holomorphic 1-form $\eta$ (see Saito \cite{Saito1980}) such that 
\[g\omega=hdf+f\eta.\]
The \textit{G\'omez-Mont--Seade--Verjovsky index} of the foliation $\F$ at the origin with respect to $V$ is 
\[\operatorname{GSV}_0(\F,V)=\frac{1}{2\pi i}\int_{\partial{V}}\frac{g}{h}d\left(\frac{h}{g}\right),\]
where $\partial{V}$ is the link of $V$ at $0\in\C^2$. This index was introduced in \cite{GSV} but here we
follow the presentation of \cite{Brunella1997}. 
\par We use the following result of G\'omez-Mont \cite{Xavier1998}, (see also \cite[Proposition 6.2]{FPGBSM})
\begin{equation}\label{xav}
    \operatorname{GSV}_0(\F,V)=\tau_0(\F,V)-\tau_0(V)
\end{equation}
to prove the following corollary.
\begin{corollary}
    Let $\F:\omega=0$ be a singular holomorphic foliation at $0\in\C^2$. Suppose that $V=\{f=0\}$ is a complex analytic curve at $0\in\C^2$ invariant by $\omega$. Then 
    \begin{eqnarray*}
        \tau_{BR}(\F,X,V)-\tau_{BR}(f,X)&=&\operatorname{Ind}_{\operatorname{GSV}}(\F;X,V,0)-\operatorname{Ind}_{\operatorname{GSV}}(df;X,V,0)+\operatorname{GSV}_0(\F,V)\\
        & & +\dim_{\mathbb{C}}\dfrac{\omega(\Theta_X)\cap \langle f\rangle}{\omega(\Theta_X^T)\cap \langle f\rangle}-\dim_{\mathbb{C}}\dfrac{df(\Theta_X)\cap \langle f\rangle}{df(\Theta_X^T)\cap \langle f\rangle}
    \end{eqnarray*}
    \end{corollary}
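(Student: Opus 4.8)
The plan is to read off both sides of the identity as a difference of two applications of the master formula in Theorem \ref{2mainresult}, so that most terms cancel and the remaining discrepancy is packaged by the G\'omez-Mont relation \eqref{xav}.

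First I would apply Theorem \ref{2mainresult} to the $1$-form $\omega$ defining $\F$. Since we are in $(\C^2,0)$ we have $I_V=\langle f\rangle$, and writing $\tau_{BR}(\F,X,V)=\tau_{BR}(\omega,X,V)$ and $\operatorname{Ind}_{\operatorname{GSV}}(\F;X,V,0)=\operatorname{Ind}_{\operatorname{GSV}}(\omega;X,V,0)$, the theorem gives
\[\tau_{BR}(\F,X,V)=\operatorname{Ind}_{\operatorname{GSV}}(\omega;X,V,0)+\tau_0(\omega,V)-\tau_0(X)+\dim_{\C}\frac{\omega(\Theta_X)\cap\langle f\rangle}{\omega(\Theta_X^T)\cap\langle f\rangle}.\]
Next I would apply the same formula to the exact form $df$; this is exactly Corollary \ref{corollary1}, and since $\tau_0(df,V)=\tau_0(V)$ it yields
\[\tau_{BR}(f,X)=\operatorname{Ind}_{\operatorname{GSV}}(df;X,V,0)+\tau_0(V)-\tau_0(X)+\dim_{\C}\frac{df(\Theta_X)\cap\langle f\rangle}{df(\Theta_X^T)\cap\langle f\rangle}.\]

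Subtracting the second identity from the first, the term $\tau_0(X)$ --- which depends only on the variety $X$ and not on the chosen $1$-form --- cancels. What remains is precisely the difference of the two GSV-indices, the difference of the two intersection-quotient dimensions (the last two terms of the claimed formula), and the scalar $\tau_0(\omega,V)-\tau_0(V)$. The final step is to identify this scalar: since $\tau_0(\omega,V)=\tau_0(\F,V)$, relation \eqref{xav} gives $\tau_0(\F,V)-\tau_0(V)=\operatorname{GSV}_0(\F,V)$, and substituting this produces exactly the asserted equality.

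Because the argument is a direct subtraction of two known formulas, there is no genuinely hard step; the point requiring care is that both applications are legitimate \emph{simultaneously}. Specifically, I must ensure that $X$ is an isolated curve (a hypersurface in $\C^2$) invariant by neither $\omega$ nor $df$, and that the relevant Bruce-Roberts numbers are finite, so that Theorem \ref{2mainresult} applies to $\omega$ and Corollary \ref{corollary1} applies to $f$; these are the standing hypotheses under which both formulas were derived. The only other subtlety is purely notational --- recognizing $\tau_{BR}(\F,X,V)$, $\operatorname{Ind}_{\operatorname{GSV}}(\F;X,V,0)$, and $\tau_0(\F,V)$ as the $\omega$-versions of the corresponding invariants --- after which relation \eqref{xav} does all the remaining work.
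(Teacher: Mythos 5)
Your proposal is correct and follows essentially the same route as the paper: apply Theorem \ref{2mainresult} to $\omega$ and Corollary \ref{corollary1} to $df$, subtract so that $\tau_0(X)$ cancels, and convert $\tau_0(\F,V)-\tau_0(V)$ into $\operatorname{GSV}_0(\F,V)$ via the G\'omez-Mont relation \eqref{xav}. Your added remark about verifying that $X$ is non-invariant and the relevant numbers are finite is a reasonable point of care, but it does not change the argument.
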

    \begin{proof}
According to Theorem \ref{2mainresult} and Corollary \ref{corollary1}, we have
\[\tau_{BR}(\F,X,V)=\operatorname{Ind}_{\operatorname{GSV}}(\F;X,V,0)+ \tau_0(\F,V)-\tau_0(X)+\dim_{\mathbb{C}}\dfrac{\omega(\Theta_X)\cap \langle f\rangle}{\omega(\Theta_X^T)\cap \langle f\rangle}\] 
\[\tau_{BR}(f,X)=\operatorname{Ind}_{\operatorname{GSV}}(df;X,V,0)+ \tau_0(V)-\tau_0(X)+\dim_{\mathbb{C}}\dfrac{df(\Theta_X)\cap \langle f\rangle}{df(\Theta_X^T)\cap \langle f\rangle}\]
We conclude the proof by subtracting the above equations and using (\ref{xav}). 
\end{proof}

\par Now we present an application with respect to the notion of \textit{relatively quasihomogeneous} of a germ $f\in\mathcal{O}_n$ along a variety $X$. This concept was defined by Bivi\`a-Ausina--Kourliouros--Ruas \cite{Bivia2024}:
\begin{definition}
    A pair $(f, X)$ in $(\C^n,0)$ will be called relatively quasihomogeneous
if there exists a vector of positive rational numbers 
$w = (w_1,\ldots, w_n)\in \mathbb{Q}^ n_{+}$, a
system of coordinates $x = (x_1,\ldots, x_n)$ and a system of generators $\langle h_1,\ldots,h_m\rangle =I_X$ of the ideal of functions vanishing on $X$, such that 
\[f(x)=\sum_{\langle w,m\rangle=1}a_m x^m,\,\,\,a_m\in\C\]
\[h_i(x)=\sum_{\langle w, m\rangle =d_i}b_{m,i}x^m,\,\,\,b_{m,i}\in\C,\,\,\,i=1,\ldots,m\]
where each $d_i\in\mathbb{Q}_{+}$ is the quasihomogeneous degree of $h_i$. 
\end{definition}
To continue, motivated by \cite[Theorem 4.1]{Bivia2024}, we state and prove our second theorem. 
\begin{secondtheorem}
Let $\F$ be a germ of a non-dicritical curve generalized foliation at $(\C^2,0)$, and let $X$ be a germ of a reduce curve at $(\C^2,0)$ not invariant by $\F$. 
Let $V=\{f=0\}$ be the reduced equation of the total set of separatrices of $\F$. If $\mu_{BR}(\F,X)=\tau_{BR}(\F,X,V)$, then
there exists coordinates $(u,v)\in \C^2$, $g,h\in\mathcal{O}_2$, with $u(0)=0$, $v(0)=0$, $g(0)\neq 0$ and integers $\alpha,\beta$, $d\in\mathbb{N}$ such that 
\[f(u,v)=\sum_{\alpha i+\beta j=d}P_{i,j}u^{i}v^{j},\,\,\,\,P_{i,j}\in\C,\]
\[g\omega=df+h(\beta v du-\alpha udv),\]
and the pair $(f,X)$ is relatively quasihomogeneous in these coordinates. 
\end{secondtheorem}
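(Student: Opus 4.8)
The plan is to reduce the statement to the two constituent equalities hidden in the hypothesis $\mu_{BR}(\F,X)=\tau_{BR}(\F,X,V)$, and then to run Mattei's theorem on the foliation side and the Bivi\`a-Ausina--Kourliouros--Ruas criterion on the gradient side. I would begin with the algebraic content of the hypothesis. Since $V$ is reduced we may write $I_V=\langle f\rangle$, and the canonical surjection $\mathcal{O}_2/\omega(\Theta_X)\twoheadrightarrow\mathcal{O}_2/(\omega(\Theta_X)+\langle f\rangle)$ has kernel isomorphic to $\langle f\rangle/(\langle f\rangle\cap\omega(\Theta_X))$; as both quotients are finite dimensional, $\mu_{BR}(\F,X)=\tau_{BR}(\F,X,V)$ holds if and only if $f\in\omega(\Theta_X)$, that is, there exists $\xi\in\Theta_X$ with $\omega(\xi)=f$. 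At the same time, Corollary \ref{coro4} turns the hypothesis into the pair of equalities $\mu_0(\F)=\tau_0(\F,V)$ and $\overline{\mu}_X(\F)=\overline{\tau}_X(\F,V)$, the latter being the module identity $\Theta_V^{\omega}=H_\omega+\Theta_X\cap\Theta_V^{\omega}$.

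Next I would feed $\mu_0(\F)=\tau_0(\F,V)$ into Mattei's Th\'eor\`eme A \cite{quasiMattei}: for a non-dicritical generalized curve foliation, the coincidence of the Milnor number with the Tjurina number relative to the total set of separatrices $V$ is equivalent to the quasihomogeneity of $\F$. This yields coordinates $(u,v)$, positive integers $\alpha,\beta$, a weighted degree $d$, and germs $g,h$ with $g(0)\neq 0$, such that $f(u,v)=\sum_{\alpha i+\beta j=d}P_{i,j}u^iv^j$ and $g\omega=df+h(\beta v\,du-\alpha u\,dv)$. This already establishes the first two displayed conclusions and, writing $\theta=\beta v\,du-\alpha u\,dv$, introduces the weighted Euler field $R=\alpha u\,\partial_u+\beta v\,\partial_v$. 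From Euler's relation $\alpha u f_u+\beta v f_v=d f$ and $\theta(R)=0$ one gets $\omega(R)=\tfrac{d}{g}f$, so $R\in\Theta_V^{\omega}$; and since $f$ is quasihomogeneous the curve $V$ is invariant by the weighted radial foliation $\theta=0$ (equivalently $df\wedge\theta=-d\,f\,du\wedge dv$).

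It remains to prove that $(f,X)$ is relatively quasihomogeneous in these coordinates. The criterion \cite[Corollary 3.4]{Bivia2024} detects relative quasihomogeneity of a pair $(f,X)$, with $f$ already quasihomogeneous, through a logarithmic Euler condition on the gradient side, amounting to the existence of a weighted Euler field tangent to $X$ (morally $f\in df(\Theta_X)$). I would therefore transport the information $f\in\omega(\Theta_X)$ to $df$. Using $g\omega=df+h\theta$ on $\xi\in\Theta_X$ with $\omega(\xi)=f$ gives $df(\xi)=g f-h\,\theta(\xi)$, while $\omega(gR-d\xi)=0$ shows $gR-d\xi\in H_\omega=\mathcal{O}_2\cdot\chi$, where $\chi=-B\,\partial_u+A\,\partial_v$ is the field dual to $\omega=A\,du+B\,dv$. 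Since $X$ is not invariant by $\F$ we have $\chi\notin\Theta_X$. Combining these relations with the module identity $\Theta_V^{\omega}=H_\omega+\Theta_X\cap\Theta_V^{\omega}$ and the $\theta$-invariance of $V$, I would correct $R$ modulo $H_\omega$ into a field of $\Theta_X$ that is still a weighted Euler field for $f$, and then conclude by \cite[Corollary 3.4]{Bivia2024}.

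The hard part is exactly this final transfer from the foliated world to the gradient world. The hypothesis is a condition on $\omega$, whereas relative quasihomogeneity of $(f,X)$ is intrinsically a statement about the pair $(f,X)$ and is read off from $df$; passing from $\omega$ to $df$ introduces the error term $h\theta$, and one must show it is harmless, e.g. that $\theta(\xi)\in\langle f\rangle$ up to a unit so that $df(\xi)$ is a unit multiple of $f$, or equivalently that the weighted Euler field can be adjusted inside $\Theta_X$ modulo $H_\omega$. The two structural inputs that should make this possible are the quasihomogeneity of the separatrix $V$ (its invariance by $\theta$) and the non-invariance of $X$ under $\F$ (which forces $\chi\notin\Theta_X$); reconciling them to pin down the Euler field inside $\Theta_X$ is the delicate step.
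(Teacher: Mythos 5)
You follow the same route as the paper: extract $\mu_0(\omega)=\tau_0(\omega,V)$ and the module identity $\Theta_V^{\omega}=H_\omega+\Theta_X\cap\Theta_V^{\omega}$ from Corollary \ref{coro4}, apply Mattei's Th\'eor\`eme A to produce the coordinates, the quasihomogeneous $f$ and the relation $g\omega=df+h\theta$, and then try to push the weighted Euler field $R=\alpha u\,\partial_u+\beta v\,\partial_v$ into $\Theta_X$ modulo $H_\omega$ so as to invoke \cite[Corollary 3.4]{Bivia2024}. The first two displayed conclusions are therefore established. The genuine gap is the final step, which you yourself flag as ``the delicate step'' and leave unexecuted: you never prove that the corrected field in $\Theta_X$ is still an eigenfield for $f$. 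The single missing fact --- and the only property of $H_\omega$ the paper's proof uses at this point --- is that every $\zeta\in H_\omega$ satisfies $\zeta(f)\in\langle f\rangle$: since $\omega$ has an isolated zero, $H_\omega=\mathcal{O}_2\cdot\chi$ where $\chi$ is the dual vector field generating $\F$, and $V$ is $\F$-invariant, so $\chi(f)\in\langle f\rangle$. With this fact, the relations you already derived close the argument in one line: you showed $\zeta':=gR-d\,\xi\in H_\omega$, where $\xi\in\Theta_X$ satisfies $\omega(\xi)=f$; hence, using Euler's relation $R(f)=d\,f$,
\[
d\,\xi(f)=(gR-\zeta')(f)=d\,g\,f-\zeta'(f)\in\langle f\rangle ,
\]
and dividing by the nonzero integer $d$ gives $\xi(f)\in\langle f\rangle$. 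Thus $f$ is an eigenfunction of $\xi\in\Theta_X$, which is exactly the hypothesis under which the paper applies \cite[Corollary 3.4]{Bivia2024}. (The paper reaches the same point by decomposing $R=\zeta+\delta$ with $\zeta\in H_\omega$ and $\delta\in\Theta_X\cap\Theta_V^{\omega}$, using the module identity; your $\xi$ plays the role of $\delta$.)

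Moreover, the two ``structural inputs'' you propose for closing the gap point in the wrong direction. Neither the non-invariance of $X$ (i.e.\ $\chi\notin\Theta_X$) nor the $\theta$-invariance of $V$ enters this step of the paper's argument, and no unit condition is required: what is fed into \cite[Corollary 3.4]{Bivia2024} is only the eigenfunction relation $\delta(f)\in\langle f\rangle$ for some $\delta\in\Theta_X$, not $df(\xi)=(\mathrm{unit})\cdot f$. Equivalently, your worry that one must separately prove ``$\theta(\xi)\in\langle f\rangle$'' dissolves: from $df(\xi)=gf-h\theta(\xi)$ and the computation above, $h\theta(\xi)=gf-df(\xi)\in\langle f\rangle$ comes for free. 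So the skeleton of your argument is the paper's, but the decisive lemma --- invariance of $V$ under $\F$ forces $H_\omega$ to preserve the ideal $\langle f\rangle$ --- is absent, and without it your reduction to \cite[Corollary 3.4]{Bivia2024} is incomplete.
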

\begin{proof}
Let $\F$ be defined by $\omega=Adx+Bdy$. Since $\mu_{BR}(\F,X)=\tau_{BR}(\F,X,V)$, we have 
$\mu_0(\omega)=\tau_0(\omega,V)$ and 
\begin{equation}\label{eq_8}
    \Theta_V^{\omega}=H_\omega+\Theta_X\cap\Theta_V^{\omega}
\end{equation}
by Corollary \ref{coro4}. The equality $\mu_0(\omega)=\tau_0(\omega,V)$ implies that $f\in\langle A,B\rangle$. According to \cite[Th\'eor\`eme A]{quasiMattei}, there exists coordinates $(u,v)\in(\C^2,0)$, $g,h\in\mathcal{O}_2$, with $u(0)=0$, $v(0)=0$, $g(0)\neq 0$ and integers $\alpha,\beta$, $d\in\mathbb{N}$ such that 
\begin{equation}\label{eq_7}
    f(u,v)=\sum_{\alpha i+\beta j=d}P_{i,j}u^{i}v^{j}
\end{equation}
and
\begin{equation}\label{eq_9}
    g\omega=df+h(\beta v du-\alpha udv).
\end{equation}
Let $X_{\alpha,\beta}=\alpha u\frac{\partial}{\partial{u}}+\beta v\frac{\partial}{\partial{v}}$. It follows from (\ref{eq_7}) and (\ref{eq_9}) that $X_{\alpha,\beta}\in\Theta_V^{\omega}$. Suppose that $X_{\alpha,\beta}\not\in\Theta_X$ in these coordinates (otherwise, there is noting to prove by \cite[Corollary 3.4]{Bivia2024}). By (\ref{eq_8}), we get that there exists $\zeta\in H_\omega$ such that $\delta=X_{\alpha,\beta}-\zeta\in\Theta_X\cap\Theta_V^{\omega}$. Since $\zeta$ is a vector field defining $\F$, $f$ is an eigenfunction for $\zeta$, and from (\ref{eq_7}), $f$ also is an eigenfunction for $X_{\alpha,\beta}$. Thus, $f$ is an eigenfunction for $\delta\in\Theta_X$. Finally, we obtain that the pair $(f,X)$ is relatively quasihomogeneous in these coordinates by \cite[Corollary 3.4]{Bivia2024}.
\end{proof}

\end{document}